\crefname{assumption}{Assumption}{Assumptions}
\crefname{hypothesis}{Hypothesis}{Hypotheses}
\title{Recursive Optimal Stopping with Poisson Stopping Constraints\thanks{{Funding:} GL was partially funded by NSFC (No.121711169). ZW was funded by the National Key R\&D Program of China (No.2023YFA1009200), the Natural Science Foundation of Shandong Province (No.ZR2019ZD42) and the Taishan Scholars Climbing Program of Shandong (No.TSPD20210302).}}
\author{
    Gechun Liang\thanks{Department of Statistics, University of Warwick, Coventry CV4 7AL, U.K. E-mail:\email{g.liang@warwick.ac.uk}.}
    \and Wei Wei\thanks{Department of Actuarial Mathematics and Statistics, Heriot-Watt University, Edinburgh EH14 4AS, U.K. E-mail:\email{wei.wei@hw.ac.uk}.}
    \and 
    Zhen Wu\thanks{School of Mathematics, Shandong University, Jinan 250100, P.R. China. E-mail:\email{wuzhen@sdu.edu.cn}, \email{zhendaxu@mail.sdu.edu.cn}.}
    \and 
    Zhenda Xu\footnotemark[4]
}
\newcommand{\gl }{\color{black}}
\begin{document}

\maketitle

\begin{abstract}
This paper solves a recursive optimal stopping problem with Poisson stopping constraints using the penalized backward stochastic differential equation (PBSDE) with jumps. Stopping in this problem is only allowed at Poisson random intervention times, and jumps play a significant role not only through the stopping times but also in the recursive objective functional and model coefficients. To solve the problem, we propose a decomposition method based on Jacod-Pham that allows us to separate the problem into a series of sub-problems between each pair of consecutive Poisson stopping times. To represent the value function of the recursive optimal stopping problem when the initial time falls between two consecutive Poisson stopping times and the generator is concave/convex, we leverage the comparison theorem of BSDEs with jumps. We then apply the representation result to American option pricing in a nonlinear market with Poisson stopping constraints.
\end{abstract}

\begin{keywords}
Constrained optimal stopping, recursive objective functional, Penalized backward stochastic differential equation, Poisson stopping times, Jacod-Pham decomposition.
\end{keywords}

\begin{MSCcodes}
60H10, 60G40, 93E20
\end{MSCcodes}

\section{Introduction}
\label{sec:intro}

The solution of a reflected backward stochastic differential equation (RBSDE)
is widely recognized to correspond to the value function of an optimal stopping problem. See \cite{cvitanic1996backward}, \cite{el1997reflected} and \cite{hamadene1999infinite} driven by Brownian motion, \cite{grigorova2017reflected}, \cite{hamadene2003reflected} and \cite{hamadene2016reflected} involving jumps,  and \cite{bayraktar2011optimal}, \cite{Dumitrescu}, \cite{grigorova2020optimal} and \cite{quenez2014reflected} for the case of $g$-expectations.

However, it is worth noting that a penalized backward stochastic differential equation (PBSDE), often utilized to approximate an RBSDE as the penalization parameter $\lambda$ approaches infinity, also allows for stochastic control representations.
Lepeltier and Xu \cite{Lepeltier2005} established a connection between the solution of a PBSDE and the value function of a standard optimal stopping problem. In this connection, a modified obstacle $\min\{S_t, Y_t^\lambda\}$ depending on the original obstacle $S_t$ and the solution $Y_t^\lambda$ of the PBSDE has been introduced. More relevant to our work, Liang \cite{liang2015stochastic} discussed the links between the solution of a PBSDE and the value function of an optimal stopping problem with Poisson stopping constraints. Roughly speaking, he established that the PBSDE (\ref{FBSDE-Penalization-2}) in Lemma \ref{recursive-nonre} admits the following representation:
\begin{equation}\label{linear_case}
Y_0^{\lambda} = \sup_{\tau > 0} \mathbb{E} \left[ \int_0^{\tau \wedge T} f(x, X_s) \, ds + g(\tau \wedge T, X_{\tau \wedge T})\right],
\end{equation}
where \(\tau\) is selected from an exogenous sequence of Poisson stopping times.

The study of optimal stopping problems with Poisson stopping constraints was initiated by Dupuis and Wang \cite{dupuis2002optimal} who approached it by solving two ordinary differential equations defined in the continuation and stopping regions. Subsequently, Lempa \cite{lempa2012optimal} and Hobson \cite{hobson2021shape}  further developed this line of research by considering general one-dimensional diffusions. Liang and Sun \cite{liang2019dynkin} extended to Dynkin games with Poisson stopping constraints and investigated optimal conversion and calling strategies for convertible bonds. Hobson and Zeng \cite{hobson2019constrained} generalized the exogenous homogeneous Poisson process in Poisson stopping constraints to an inhomogeneous Poisson process, allowing players to choose their intensity with the corresponding costs. Menaldi and Robin examined stopping constraints with independent and identically distributed arrival times for optimal stopping problems in \cite{menaldi2016some} and for optimal impulse control problems in \cite{menaldi2017some} and \cite{menaldi2018some}.  Reisinger and Zhang further obtained the convergence rates in \cite{reisinger2020error}.

Poisson stopping constraints offer both mathematical generalizations and valuable modeling merits. The underlying Poisson process is utilized to capture external instantaneous impacts on the system, such as credit events. These events result in totally inaccessible jumps within the system. Additionally, Poisson stopping constraints serve as external constraints that determine admissible stopping times for the system to have sufficient liquidity and be allowed to stop. Furthermore, they can be employed to describe information constraints,  allowing for the consideration of scenarios where observations are only available at Poisson stopping times. For their applications, see Liang et al. \cite{liang2015funding} for dynamic bank run problems; Palmowski et al. \cite{Palmowski2020} for optimal capital structure models.

In this paper, we focus on a recursive optimal stopping problem with Poisson stopping constraints, with the objective functional represented by a sequence of possibly nonlinear BSDEs with jumps, stopped by a sequence of Poisson stopping times. The player then selects over Poisson stopping times to maximize their recursive objective functional. Unlike previous studies on nonlinear optimal stopping under $g$-expectation  (see \cite{bayraktar2011optimal}, \cite{Dumitrescu}, \cite{grigorova2020optimal}, and \cite{quenez2014reflected}), our model does not require the extra constant preservation assumption on the $g$-expectation. Moreover, while existing results on Poisson stopping constraints typically assume that the objective functional depends solely on Brownian filtration, we demonstrate that including Poisson filtration in the objective functional introduces unexpected complexity. Indeed,  when the objective functional is solely based on the Brownian filtration, it results in a conventional penalty term commonly used in PBSDEs. However, when including jumps into the objective functional, this is no longer the case. Our findings shed light on the intricate
challenges associated with including jumps in the objective functional of optimal stopping problems with Poisson stopping constraints.

One of the fundamental tools utilized in our analysis is the Jacod-Pham decomposition for optional and predictable processes in the underlying Brownian and Poisson filtration. This filtration assumption allows us to decompose a process, whether it is optional or predictable, into a sequence of processes defined over consecutive  Poisson stopping time intervals. Within each interval, the corresponding process depends solely on the jumps up to that point and on the Brownian motion. The Jacod-Pham decomposition, initially proposed by Jacod \cite{jacod1985grossissement} (see also Jeulin and Yor \cite{jeulin2006grossissements}) has been subsequently applied to solve BSDEs with jumps by Pham \cite{pham2010stochastic}  (see also Jiao et al. \cite{jiao2013}, Kharroubi and Lim \cite{kharroubi2014progressive}). Motivated by the Jacod-Pham decomposition, we introduce an update scheme for the penalty term within each Poisson stopping time interval. This scheme involves shifting the value function and the obstacle process forward. By utilizing the representation formula of the jump component of the BSDE solution and introducing a new auxiliary process, we are able to capture and record the differences of the value functions in any two consecutive  Poisson stopping time intervals as well as the differences in the obstacle processes within these intervals.
By incorporating these elements, we effectively account for the variations and changes that occur across successive Poisson stopping time intervals.

Regarding the generator in the BSDE representation for the recursive objective functional, we first consider the case that the generator is independent of $Z$ and $C$, which are the hedging components of the BSDE solution. In this case, the objective functional is defined recursively using the solution $Y$ of the representing BSDE. The corresponding PBSDE representation for the recursive optimal stopping problem with Poisson stopping constraints has a relatively simple yet interpretable penalty term. This penalty term involves the forward shift of the value function and the obstacle process. However, when the generator depends on $Z$ and $C$, the penalty term takes on an unexpectedly complex form due to the intricacies caused by the Poisson stopping times. In the latter case, we first consider the scenario where the generator depends on $Z$ and $C$ in a linear way. We introduce an adjoint process allowing us to transform the generator in such a way that it no longer relies on $Z$ and $C$. When the generator is concave/convex in $Z$ and $C$, we apply a convex duality argument along with the BSDE comparison with jumps and propose that the corresponding PBSDE representation for the recursive optimal stopping with Poisson stopping constraints should be a limit for a sequence of PBSDEs associated with the linear dependence on $Z$ and $C$.

Many examples of objective functionals fit into our recursive optimal stopping framework. One such example is the pricing of American options, as proposed in \cite{dupuis2002optimal}. In this case, the generator is linear in $Y$ and $Z$, while remaining independent of $C$. Another example is the stock trading model presented in \cite{DeAngelis2022recursive}, which is equivalent to the generator that depends solely on $Y$. In the following, we address a comprehensive example in American options that includes all possible cases we investigate in the paper.

\begin{example}[American option pricing in a nonlinear market with Poisson stopping constraints]\label{ep-2}
Suppose that a market consists of three securities: a risk-free bond $S^0$ and two risky assets $S^1$ and $S^2$. They evolve according to the following equations
\begin{align*}
dS_t^0&=S_t^0r_tdt,\\
dS_t^1&=S_t^1[\mu_t^1dt+\sigma_tdW_t],\\
dS_t^2&=S_{t-}^2[\mu_t^2dt+\eta_t\tilde{N}(dt)],
\end{align*}
where $W$ is a standard Brownian Motion, $\tilde{N}$ is the compensated Poisson martingale measure of a Poisson process $N$, and $r_t\in\mathbb{R}, \mu_t^1\in\mathbb{R},\mu_t^2\in\mathbb{R}, \sigma_t> 0$, $\eta_t>-1$ are bounded processes predictable to the underling Brownian motion and Poisson process. To exclude arbitrage, we further assume that $\mu_t^2=0$ when $\eta_t=0$. Otherwise, $S^2$ becomes another risk-free bond. Furthermore, for technical reasons, we assume that $1/\sigma_t$ and $1/\eta_t\mathbb{I}_{\{\eta_t\neq 0\}}$ are also bounded.

Let $\pi=(\pi^1,\pi^2)$ denote the amount of the money invested in the risky assets $(S^1,S^2)$. The self-financing strategy leads to a linear equation for the wealth process
\begin{equation}\label{wealth_equ}
dV_t=\big[r_tV_t+ (\mu_t^1-r_t){\pi}_t^1+(\mu_t^2-r_t)\pi_t^2\big]dt + \sigma_t \pi_t^1 dW_t + \eta_t\pi_t^2 \tilde{N}(dt),
\end{equation}

One may generalize the above linear wealth equation to a general nonlinear case, and use it to replicate an American option with payoff $g(S^1,S^{2})$ and maturity $T$, exercised at a sequence of Poisson stopping times $\tau$ generated by the Poisson process $N$.
It will lead to a sequence of BSDEs with jumps as the objective functional, stopped by a sequence of Poisson stopping times $\tau$:
\[
\tilde{Y}_{t}^\tau = g({S}^1_{\tau\wedge T},{S}^2_{\tau\wedge T}) + \int_{t\wedge \tau}^{\tau\wedge T} f(s, \tilde{Y}_s^\tau, \tilde{Z}_s^\tau, \tilde{C}_s^\tau)ds- \int_{t\wedge \tau}^{\tau\wedge T} \tilde{Z}_s^\tau dW_s- \int_{t\wedge \tau}^{\tau\wedge T} \tilde{C}_s^\tau\tilde{N}(ds).
\]
The player then selects over the sequence of Poisson stopping times $\tau$ to maximize their recursive objective functional:  $\sup_{\tau>0} \tilde{Y}^\tau_0.$  In the classical linear case, the generator $f$ has the form
$$f(t,y,z,c)=-r_ty-\frac{\mu_t^1-r_t}{\sigma_t}z-\frac{\mu_t^2-r_t}{\eta_t}c\mathbb{I}_{\{\eta_t\neq 0\}}$$
according to the linear wealth equation (\ref{wealth_equ}).

In this paper, we will discuss general nonlinear cases motivated by American option models in nonlinear markets (see, for example, \cite{Dumitrescu2}, \cite{grigorova2021american}, and \cite{Tianyang}). This is driven by the existence of bid-ask spreads for the drifts of the two risky assets and the interest rate of the risk-free bond, which depend on the long-short positions of the two risky assets and the risk-free bond. Specifically, we will assume that $(\mu_t^1,\mu_t^2,r_t)=(\underline{\mu}^1_t,\underline{\mu}^2_t,\underline{r}_t)$ for the long positions of the two risky assets and the risk-free bond, and $(\mu_t^1,\mu_t^2,r_t)=(\overline{\mu}^1_t,\overline{\mu}^2_t,\overline{r}_t)$ for the short positions. The bounded and predictable processes $\overline{\mu}^1_t\geq \underline{\mu}^1_t$, $\overline{\mu}^2_t\geq \underline{\mu}^2_t$, and $\overline{r}_t\geq \underline{r}_t$ represent the bid-ask spreads of the drifts of the two risky assets and the interest rate of the risk-free bond, respectively. Consequently, the linear wealth equation (\ref{wealth_equ}) becomes nonlinear:     \begin{align}\label{wealth_equ_nonlinear}
dV_t
=& \big[(V_t - \pi_t^1 - \pi_t^2)^+ \underline{r}_t - (V_t - \pi_t^1 - \pi_t^2)^- \overline{r}_t\big] dt \notag\\
& + (\pi_t^1)^+ (\underline{\mu}^1_t dt + \sigma_t dW_t) - (\pi_t^1)^- (\overline{\mu}^1_t dt + \sigma_t dW_t) \notag\\
& + (\pi_t^2)^+ (\underline{\mu}^2_t dt + \eta_t \tilde{N}(dt)) - (\pi_t^2)^- (\overline{\mu}^2_t dt + \eta_t \tilde{N}(dt)) \notag\\
=& \big[\underline{r}_t (V_t - \pi_t^1 - \pi_t^2)^+ - \overline{r}_t (V_t - \pi_t^1 - \pi_t^2)^- \notag\\
&\quad + \underline{\mu}^1_t (\pi_t^1)^+ - \overline{\mu}^1_t (\pi_t^1)^- \notag\\
&\quad + \underline{\mu}^2_t (\pi_t^2)^+ - \overline{\mu}^2_t (\pi_t^2)^-\big] dt+ \sigma_t \pi_t^1 dW_t + \eta_t \pi_t^2 \tilde{N}(dt).
\end{align}
It then follows from (\ref{wealth_equ_nonlinear}) that the generator has the following nonlinear form:
\begin{align*}
f(t, y, z, c) = & -\underline{r}_t \left(y - \frac{z}{\sigma_t} - \frac{c}{\eta_t}\mathbb{I}_{\{\eta_t\neq 0\}}\right)^+
                  + \overline{r}_t \left(y - \frac{z}{\sigma_t} - \frac{c}{\eta_t}\mathbb{I}_{\{\eta_t\neq 0\}}\right)^- \\
                & - \underline{\mu}^1_t \frac{z^+}{\sigma_t}
                  + \overline{\mu}^1_t \frac{z^-}{\sigma_t}
                  - \underline{\mu}^2_t \left(\frac{c}{\eta_t}\right)^+\mathbb{I}_{\{\eta_t\neq 0\}}
                  + \overline{\mu}^2_t \left(\frac{c}{\eta_t}\right)^-\mathbb{I}_{\{\eta_t\neq 0\}},
\end{align*}
which is Lipchitz continuous and convex in $(y,z,c)$.
\end{example}

The rest of this paper is organized as the following. \Cref{sec:preli} formulates the recursive optimal stopping problem with Poisson stopping constraints and introduces a class of PBSDEs with jumps as the candidate solutions for the problem. Then, we prove the representation results by considering the case where the generator is independent of $Z$ and $C$ in \Cref{sec:repre-noZC}, linear in $Z$ and $C$ in \Cref{sec:repre-ZC} and convex in $Z$ and $C$ in \Cref{sec:repre-ZC-nonlinear}.  We apply the representation results to American option pricing in \Cref{sec:appli} and conclude in \Cref{sec:concl} with possible extensions.

\section{Preliminary and main result}
\label{sec:preli}

In this section, we present the formulation of the recursive optimal stopping problem with Poisson stopping constraints and introduce a class of PBSDEs used to characterize its solution.

Let $\left(\Omega,\mathcal{F},\mathbb{P}\right)$ be a complete probability space supporting a $d$-dimensional standard Brownian motion $W$ with its augmented filtration $\mathbb{F}=\left\{\mathcal{F}_t\right\}_{t\geq 0}$. Moreover, let $\left\{T_i\right\}_{i\geq 0}$ be the arrival times of an independent Poisson process $N$ with the intensity $\lambda$ and the augmented filtration $\mathbb{H} =\left\{\mathcal{H}_t\right\}_{t\geq 0}$. Define $\mathcal{G}_t := \mathcal{F}_t\bigvee\mathcal{H}_t$ and $\mathbb{G} := \left(\mathcal{G}_t\right)_{t\geq 0}$. Without loss of generality, we assume that $T_0=0$ and $T_\infty = \infty$. We also use $N := N(\omega, dt)$ to represent the random jump measure generated by the Poisson process $N$ with the compensator $\hat{N}(dt) := \lambda dt$ such that $\tilde{N}((0,t]) = (N-\hat{N})((0,t])$, $t\geq 0$, is a martingale. In the rest of our paper, we will use the following notations:

\begin{align*}
L_{\mathbb{G}}^2(0,T;\mathbb{R}^k) &:= \Big\{ \varphi : [0,T]\times \Omega \rightarrow \mathbb{R}^k\ \Big|\ \{\varphi_t\}_{0\leq t\leq T}\text{ is }\mathbb{G}\text{-progressively} \\
&\qquad \quad \text{measurable with }\mathbb{E} \Big[ \int_0^T \left|\varphi_t\right|^2dt \Big] < +\infty  \Big\},\\
L_{\mathbb{G}}^{2,p}(0,T;\mathbb{R}^k) &:= \Big\{ \varphi : [0,T]\times \Omega \rightarrow \mathbb{R}^k\ \Bigg|\ \{\varphi_t\}_{0\leq t\leq T}\text{ is }\mathbb{G}\text{-predictable with }\\
&\qquad \quad \mathbb{E} \Big[ \int_0^T \left|\varphi_t\right|^2dt \Big] < +\infty  \Big\},\\
%
%
%
S_{\mathbb{G}}^2(0,T;\mathbb{R}^k) &:= \Big\{ \varphi : [0,T]\times \Omega \rightarrow \mathbb{R}^k\ \Big|\ \{\varphi_t\}_{0\leq t\leq T}\text{ is }\mathbb{G}\text{-adapted with}\\
&\qquad \quad \text{c\`adl\`ag sample paths and } \mathbb{E} \Big[ \sup_{0\leq t\leq T}\left|\varphi_t\right|^2 \Big] < +\infty  \Big\},\\
\mathcal{T} &:= \Big\{ \tau : \Omega \rightarrow [0,T]\ \Big|\ \tau \text{ is a $\mathbb{G}$-stopping time}\Big\},\\
\mathbb{I}_t^i &:= \mathbb{I}_{\{T_{i-1} \leq t < T_i\}},\quad  i\geq 1.
\end{align*}

\subsection{Recursive optimal stopping problem with Poisson stopping constraints}

To start with, we introduce the admissible stopping set starting at $T_i$ as
\begin{align*}
\mathcal{R}_{T_i}(\lambda) := \left\{\left. \tau\in \mathcal{T}\, \right| \,\text{for any }\omega\in\Omega,\,\tau(\omega) = T_n(\omega)\wedge T\text{, where }n \geq i \right\},\
\end{align*}
and the admissible stopping set starting at time $t$, which may not necessarily be a jump time, as
\begin{align*}
\mathcal{R}_{t}(\lambda) := \left\{\left. \tau\in \mathcal{T}\, \right| \,\text{for any }\omega\in\Omega,\,\tau(\omega) = T_n(\omega)\wedge T \geq t\text{, where }n \geq 0 \right\}.
\end{align*}
For $\mathcal{R}_{t+}(\lambda)$, we require $\tau(\omega) = T_n(\omega)\wedge T > t$ instead. Moreover, for $\tau \in \mathcal{R}_{t+}(\lambda)$, we introduce the following BSDE with jumps as a recursive objective functional: for $t\in[0,T]$,
\begin{align}\label{FBSDE-Stopping}
\left\{
\begin{aligned}
{X}_t &= x + \int_0^{t} b(s, {X}_s)ds + \int_0^{t} \sigma(s, {X}_s) dW_s + \int_0^{t} h(s, {X}_{s-}) \tilde{N}(ds), \\
\tilde{Y}_{t}^\tau &= {g({\tau}, {X}_{\tau})} + \int_{t}^{\tau} f(s, {X}_s, \tilde{Y}_s^\tau, \tilde{Z}_s^\tau, \tilde{C}_s^\tau)ds \\
&\qquad - \int_{t}^{\tau} \tilde{Z}_s^\tau dW_s- \int_{t}^{\tau} \tilde{C}_s^\tau\tilde{N}(ds).
\end{aligned}
\right.
\end{align}
In addition, the coefficients $b\left(t,x\right) : \Omega \times \left[0,T\right] \times \mathbb{R}^n \rightarrow \mathbb{R}^n$, $\sigma\left(t,x\right) : \Omega \times \left[0,T\right] \times \mathbb{R}^n \rightarrow \mathbb{R}^{n\times d}$, $h\left(t,x\right) : \Omega \times \left[0,T\right] \times \mathbb{R}^n \rightarrow \mathbb{R}^n$, $g\left(t,x\right) : \Omega \times \left[0,T\right] \times \mathbb{R}^n \rightarrow \mathbb{R}$ and $f\left(t,x,y,z,c\right) : \Omega \times \left[0,T\right] \times \mathbb{R}^n \times \mathbb{R} \times \mathbb{R}^d \times \mathbb{R} \rightarrow \mathbb{R}$ satisfy the following assumptions.
\begin{assumption}\label{(H1)}
The coefficients $b(\cdot,x)\in L_{\mathbb{G}}^2(0,T;\mathbb{R}^n),\, \sigma(\cdot,x) \in L_{\mathbb{G}}^2(0,T;\mathbb{R}^{n\times d})$ and $h(\cdot,x) \in L_\mathbb{G}^{2,p}(0,T;\mathbb{R}^n)$ for any $x\in\mathbb{R}^n$, and there exists a constant $L > 0$ such that for any $t\in[0,T]$, $x$ and $x^\prime \in \mathbb{R}^n$,
\begin{align*}
\left|b\left(t,x\right)\right| + \left|\sigma\left(t,x\right)\right| + \left|h\left(t,x\right)\right|  &\leq L\left(1 + |x|\right),\\
\left|b\left(t,x\right) - b\left(t,x^\prime\right)\right| + \left|\sigma\left(t,x\right) - \sigma\left(t,x^\prime\right)\right| + \left|h\left(t,x\right) - h\left(t,x^\prime\right)\right| &\leq L\left|x - x^\prime\right|.
\end{align*}
\end{assumption}
\begin{assumption}\label{(H2)}
The generator $f(\cdot,x,y,z,c) \in L_{\mathbb{G}}^2(0,T;\mathbb{R})$, the obstacle process $g(\cdot,x) \in S_{\mathbb{G}}^2(0,T;\mathbb{R})$ for any $(x,y,z,c)\in \mathbb{R}^n\times \mathbb{R}\times \mathbb{R}^d\times \mathbb{R}$, and there exists a constant $L > 0$ such that for any $t\in[0,T]$, $(x,y,z,c)$ and $(x^\prime, y^\prime, z^\prime, c^\prime) \in \mathbb{R}^n\times \mathbb{R}\times \mathbb{R}^d\times \mathbb{R}$,
\begin{align*}
& \left|g\left(t,x\right)\right|\leq L(1+|x|),\qquad\left|g\left(t,x\right) - g\left(t,x^\prime\right)\right| \leq L\left|x - x^\prime\right|,\\
&\left|f\left(t,x,y,z,c\right)\right| \leq L\left(1 + |x| + |y| + |z|\right),\\
&\left|f\left(t,x,y,z,c\right)- f\left(t,x^\prime,y^\prime,z^\prime,c^\prime\right)\right| \leq L\left(\left|x - x^\prime\right| + \left|y - y^\prime\right| + \left|z - z^\prime\right| + \left|c - c^\prime\right|\right).
\end{align*}
\end{assumption}
The solvability of \eqref{FBSDE-Stopping} can be obtained under the above assumptions according to \cite[Theorem 2.1]{wu2003fully}.
\begin{lemma}\label{BSDE-RandomDuration}
Suppose that \cref{(H1),(H2)} hold. The BSDE with jumps \eqref{FBSDE-Stopping} admits a unique adapted solution $({X}, \tilde{Y}^\tau, \tilde{Z}^\tau, \tilde{C}^\tau) \in S_\mathbb{G}^2(0, T;\mathbb{R}^n) \times S_\mathbb{G}^2(0, T;\mathbb{R}) \times L_\mathbb{G}^{2,p}(0, T;\mathbb{R}^d)\times L_\mathbb{G}^{2,p}(0,T;\mathbb{R}) $.
\end{lemma}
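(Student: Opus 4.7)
The plan is to decouple the forward SDE from the BSDE and reduce each half to a standard well-posedness result, noting that Assumptions \ref{(H1)} and \ref{(H2)} are precisely the hypotheses required for the cited theorem. First I would solve the forward SDE with jumps for $X$ in isolation: under \cref{(H1)} the drift, diffusion, and jump coefficients are $\mathbb{G}$-progressively/predictably measurable with uniform linear growth and Lipschitz bounds, so a Picard iteration combined with the Burkholder-Davis-Gundy inequality applied separately to the Brownian stochastic integral and the compensated Poisson integral, together with Gronwall's lemma, produces a unique strong solution $X\in S_{\mathbb{G}}^2(0,T;\mathbb{R}^n)$. This part is entirely standard and uses only the completeness of $S_{\mathbb{G}}^2$ under the sup-$L^2$ norm.

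Given $X$, I would then treat the backward equation as a BSDE with jumps on the random horizon $[t,\tau]$. Since $\tau$ is a bounded $\mathbb{G}$-stopping time (dominated by $T$), I would extend the problem to the deterministic horizon $[0,T]$ in the usual way: set the driver to zero on $\{s>\tau\}$ and stop the solution at $\tau$. With $X\in S_{\mathbb{G}}^2$ already in hand, \cref{(H2)} gives that $(s,y,z,c)\mapsto f(s,X_s,y,z,c)\mathbb{I}_{\{s\le\tau\}}$ is uniformly Lipschitz in $(y,z,c)$ with $L_{\mathbb{G}}^2$-integrable coefficient $f(s,X_s,0,0,0)$, and the terminal datum $g(\tau,X_\tau)$ is $\mathcal{G}_\tau$-measurable and square-integrable by the linear growth of $g$ and the $S^2$-bound on $X$. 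At this point the hypotheses of \cite[Theorem 2.1]{wu2003fully} are satisfied (in the decoupled specialization), and that theorem supplies a unique triple $(\tilde{Y}^\tau,\tilde{Z}^\tau,\tilde{C}^\tau)$ in the required product space.

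The only substantive point of care is the jump component $\tilde{C}^\tau$: the martingale representation theorem on the filtration $\mathbb{G}$ generated by $W$ and $\tilde{N}$ guarantees that the orthogonal decomposition of the square-integrable martingale $\mathbb{E}[\cdot\mid\mathcal{G}_t]$ produces a predictable integrand against $\tilde{N}$, so $\tilde{C}^\tau\in L_{\mathbb{G}}^{2,p}(0,T;\mathbb{R})$ rather than merely $L_{\mathbb{G}}^2$; likewise $\tilde{Z}^\tau\in L_{\mathbb{G}}^{2,p}(0,T;\mathbb{R}^d)$, and the continuity of the Brownian and pure-jump parts yields càdlàg paths for $\tilde{Y}^\tau$, placing it in $S_{\mathbb{G}}^2$. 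The main (and essentially only) obstacle is bookkeeping the random terminal time so that the extension to $[0,T]$ preserves the Lipschitz and integrability hypotheses; once this is done, the conclusion is a direct invocation of \cite[Theorem 2.1]{wu2003fully}.
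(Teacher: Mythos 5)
Your proposal is correct and follows essentially the same route as the paper, which disposes of this lemma by decoupling the forward SDE and invoking \cite[Theorem 2.1]{wu2003fully} for the backward component. The additional care you take with the random horizon $\tau=T_n\wedge T$ (extending the driver by zero past $\tau$) and with the predictability of $\tilde{Z}^\tau,\tilde{C}^\tau$ via martingale representation in $\mathbb{G}$ is exactly the standard bookkeeping implicit in that citation.
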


The BSDE with jumps (\ref{FBSDE-Stopping}) requires an additional condition, as stated in \cite[Theorem 4.1]{wu2003fully} and \cite[Theorem 4.4]{quenez2013bsdes}, in order to restrict the magnitude of the jumps and ensure that the comparison theorem holds.
\begin{assumption}\label{(H3)}
For any $(t,y,z)\in [0,T]\times \mathbb{R}\times \mathbb{R}^d,\,c$ and $c^\prime\in \mathbb{R}$ with $c \neq c^\prime$,
\begin{align}\label{assumption-compari}
\frac{f(t, X_t, y, z, c) - f(t, X_t, y, z, c^\prime)}{\lambda(c - c^\prime)} \geq -1.
\end{align}
\end{assumption}

The recursive objective functional of the optimal stopping problem with Poisson stopping constraints is given by \eqref{FBSDE-Stopping}. Then, we propose the following optimal stopping problem.
\begin{problem*}[Recursive optimal stopping problem with Poisson stopping constraints]\label{(R-OSP-P)}
Suppose that \cref{(H1),(H2),(H3)} hold. We aim to find an admissible stopping time $\hat{\tau}^* \in \mathcal{R}_{T_1}(\lambda)$ to maximize the recursive objective functional given by \eqref{FBSDE-Stopping}, such that
\begin{align*}
V^\lambda_0: = \sup_{\tau\in \mathcal{R}_{T_1}(\lambda)} \tilde{Y}_0^\tau  = \tilde{Y}_0^{\hat{\tau}^*}.
\end{align*}
In general, $V_t^\lambda$ is defined as the value function of the optimal stopping problem
\begin{align*}
V^\lambda_t := \sup_{\tau\in \mathcal{R}_{T_i}(\lambda)} \tilde{Y}_t^\tau=\tilde{Y}_t^{\hat{\tau}^*_t} , \quad \text{conditional on }\big\{T_{i-1}\leq t<T_i\big\}, \quad i \geq 1,
\end{align*}
with the optimal stopping time denoted as $\hat{\tau}_t^*$.
\end{problem*}
 {\gl Note that when taking the supremum over a family of random variables, we mean the essential supremum, though we do not explicitly distinguish between them in our notation.} Additionally, we have
 $V^\lambda_t= \sup_{\tau\in \mathcal{R}_{t+}(\lambda)} \tilde{Y}_t^\tau$ for $t\in[0,T)$ and, in particular, when $t=0,$ $\tau\geq T_1$, so the system is not allowed to stop at initial time $0$.\footnote{At this point, we explain the notations used in the paper. We use \(\tilde{Y}^{\tau}\) to represent the recursive objective functional given by BSDE \eqref{FBSDE-Stopping}, \(V^{\lambda}\) to denote the value function of the recursive optimal stopping problem \ref{(R-OSP-P)}, \(\hat{Y}^{\lambda}\) to represent the solution of the characterizing PBSDE \eqref{FBSDE-Penalization}, and finally \(Y^{\lambda}\) to represent the value function in \eqref{linear_case} and the solution of PBSDE \eqref{FBSDE-Penalization-2}.}

\subsection{PBSDE with jumps} 

In the following, we introduce a new PBSDE with jumps which will be utilized to solve \cref{(R-OSP-P)}, namely, for $t\in[0, T]$,
\begin{align}\label{FBSDE-Penalization}
\begin{aligned}
\hat{Y}_t^\lambda &= g({ T}, {X}_{ T}) + \int_t^{T} \Big[f\Big(s, {X}_s, \hat{Y}_s^\lambda, \hat{Z}_s^\lambda, \hat{C}_s^\lambda+\Big( g(s,{X}_s) + A_s - \hat{Y}_{s}^\lambda- \hat{C}_{s}^\lambda \Big)^+\Big)\\
&\qquad + \lambda \Big( g(s,{X}_s) + A_s - \hat{Y}_{s}^\lambda- \hat{C}_{s}^\lambda \Big)^+\Big] ds- \int_t^{ T} \hat{Z}_s^\lambda dW_s - \int_{t}^T \hat{C}_s^\lambda \tilde{N}(ds),
\end{aligned}
\end{align}
 where $A\in L_{\mathbb{G}}^{2}(0,T;\mathbb{R})$ is
an auxiliary process used to record the difference of $g(t,{X}_t)$ in $[T_{i-1}, T_i)$ and $[T_i, T_{i+1})$ for $i\geq 1$. More specifically, we introduce (Poisson) time indexes
\begin{align*}
\begin{aligned}
T_{(i)} &:= (T_1, \cdots, T_i), \quad i\geq 1,\\
\Delta_i &:= \left\{ \theta_{(i)} := (\theta_1, \cdots, \theta_i) \in\mathbb{R}^i \left| \theta_1\leq \theta_2 \leq \cdots \leq \theta_{i} \right. \right\}, \quad i\geq 1.
\end{aligned}
\end{align*}
We also write $T_{(i)} = (T_{(i-1)}, T_i)$ and $\theta_{(i)} = (\theta_{(i-1)}, \theta_i)$. Then, the Jacod-Pham decomposition for optional processes (see \cite{jacod1985grossissement}, \cite{jeulin2006grossissements} and more recently \cite{pham2010stochastic}) implies that
\begin{equation*}
g(t, {X}_{t}) = \sum_{i \geq 0} g^{i}(t, T_{(i)}) \mathbb{I}_t^{i+1},
\end{equation*}
where $g^i(\cdot,\theta_{(i)})\in L_{\mathbb{F}}^2(0,T;\mathbb{R})$ for any $\theta_{(i)}\in \Delta_i$. In turn, the auxiliary process $A$ is defined as
\begin{align*}
\begin{aligned}
{A}_t &:= \sum_{i \geq 0} {A}^{i}(t, T_{(i)}) \mathbb{I}_t^{i+1},\\
\end{aligned}
\end{align*}
with ${A}^{i}(t, T_{(i)}) := {A}^{i}(t, \theta_{(i)})|_{\theta_{(i)} = T_{(i)}}$ given by $$ {A}^{i}(t, \theta_{(i)}) := g^{i+1}(t, (\theta_{(i)}, t)) - g^{i}(t, \theta_{(i)}),\quad i\geq 0.$$

It is evident that when the  generator $f$ satisfies Assumption \ref{(H2)}, the generator of PBSDE \eqref{FBSDE-Penalization} also satisfies the same assumption. Following the result in \cite[Theorem 2.1]{wu2003fully}, we have:
\begin{lemma}\label{BSDE-p}
Suppose that \cref{(H1),(H2)} hold. Then, the PBSDE with jumps \eqref{FBSDE-Penalization} admits a unique adapted solution $( \hat{Y}^\lambda, \hat{Z}^\lambda, \hat{C}^\lambda) \in S_\mathbb{G}^2(0, T;\mathbb{R}) \times L_\mathbb{G}^{2,p}(0, T;\mathbb{R}^d)\times L_\mathbb{G}^{2,p}(0,T;\mathbb{R})$.
\end{lemma}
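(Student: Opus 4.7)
The plan is to apply the existence and uniqueness theorem for BSDEs with jumps (Theorem 2.1 of \cite{wu2003fully}) to equation \eqref{FBSDE-Penalization}, regarded as a standard BSDE with jumps whose terminal value is $g(T, X_T)$ and whose effective generator is
\[
F(s, \omega, y, z, c) := f\bigl(s, X_s, y, z, c + \phi_s(y, c)\bigr) + \lambda\, \phi_s(y, c),
\]
where $\phi_s(y, c) := \bigl(g(s, X_s) + A_s - y - c\bigr)^+$. The verification thus reduces to checking that (a) the terminal value lies in $L^2(\mathcal{G}_T)$, (b) $F$ is uniformly Lipschitz in $(y, z, c)$, and (c) $F(\cdot, 0, 0, 0) \in L_{\mathbb{G}}^2(0, T; \mathbb{R})$, after which Wu's theorem yields both existence and uniqueness in the desired space.

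For the terminal value, \cref{(H1)} gives $X \in S_{\mathbb{G}}^2(0, T; \mathbb{R}^n)$ by standard SDE estimates, and the linear growth of $g$ in \cref{(H2)} then yields $g(T, X_T) \in L^2(\mathcal{G}_T)$. For the Lipschitz property, I would observe that the map $(y, c) \mapsto \phi_s(y, c)$ is jointly $1$-Lipschitz since $x \mapsto x^+$ is $1$-Lipschitz. Combined with the $L$-Lipschitz property of $f$ in $(y, z, c)$, this gives
\[
\bigl|F(s, y, z, c) - F(s, y', z', c')\bigr| \leq (L + L + 2L + \lambda + \lambda)\bigl(|y - y'| + |z - z'| + |c - c'|\bigr),
\]
where the factor $2L$ absorbs the additional dependence through the argument $c + \phi_s(y, c)$ of $f$.

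For the integrability at zero, observe that the growth assumption in \cref{(H2)} gives $|f(s, X_s, 0, 0, c)| \leq L(1 + |X_s|)$ uniformly in $c$, so
\[
|F(s, 0, 0, 0)| \leq L(1 + |X_s|) + \lambda\bigl(|g(s, X_s)| + |A_s|\bigr) \leq (L + \lambda L)(1 + |X_s|) + \lambda |A_s|.
\]
Since $X \in S_{\mathbb{G}}^2$ and $A \in L_{\mathbb{G}}^2(0, T; \mathbb{R})$ by construction, the right-hand side is in $L_{\mathbb{G}}^2(0, T; \mathbb{R})$. Progressive measurability of $F$ in $(s, \omega)$ at any fixed $(y, z, c)$ is immediate from the progressive measurability of $X$, $g(\cdot, X_\cdot)$, and $A$.

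The main obstacle I anticipate is the Lipschitz verification, because the jump component $c$ enters $F$ both directly and nonlinearly through $\phi_s$, and one must confirm that this does not spoil the linear dependence required by Wu's theorem; the $1$-Lipschitz property of the positive part, together with the $L$-Lipschitz property of $f$ in its last argument, is exactly what rescues this. Once these checks are in place, Theorem 2.1 of \cite{wu2003fully} delivers the unique triple $(\hat{Y}^\lambda, \hat{Z}^\lambda, \hat{C}^\lambda) \in S_{\mathbb{G}}^2 \times L_{\mathbb{G}}^{2,p} \times L_{\mathbb{G}}^{2,p}$, completing the proof.
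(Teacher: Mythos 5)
Your proposal is correct and takes essentially the same route as the paper, which simply observes that the modified generator of \eqref{FBSDE-Penalization} inherits \cref{(H2)} from $f$ (the key point being that $x\mapsto x^+$ is $1$-Lipschitz, so the composition stays uniformly Lipschitz in $(y,z,c)$) and then invokes \cite[Theorem 2.1]{wu2003fully}. Your write-up just makes explicit the verification the paper calls ``evident,'' including the integrability of $F(\cdot,0,0,0)$ via $X\in S_{\mathbb{G}}^2$ and $A\in L_{\mathbb{G}}^2$.
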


In addition, note that if the generator $f$ satisfies Assumption \ref{(H3)}, then the generator of  \eqref{FBSDE-Penalization} also satisfies the same assumption. In other words, for any $(t,y,z)\in [0,T]\times \mathbb{R}\times \mathbb{R}^d,\,c$ and $c^\prime\in \mathbb{R}$ with $c \neq c^\prime$, the generator meets the requirements of Assumption \ref{(H3)}:
\begin{align*}
\begin{aligned}
&\frac{f(t, X_t, y, z, c+(g(t,X_t)+A_t-y-c)^+) +\lambda (g(t,X_t)+A_t-y-c)^+}{\lambda(c - c^\prime)}\\
&-\frac{f(t, X_t, y, z, c'+(g(t,X_t)+A_t-y-c^\prime)^+) +\lambda (g(t,X_t)+A_t-y-c^\prime)^+)}{\lambda(c - c^\prime)} \geq -1,
\end{aligned}
\end{align*}
so the comparison theorem also holds for \eqref{FBSDE-Penalization}.

\subsection{Decomposition for PBSDE with jumps}
This part provides a decomposition result for the coefficients and the solution of the PBSDE with jumps \eqref{FBSDE-Penalization}, which plays an important role in deriving its representation for the value function $V^\lambda_t$ of \cref{(R-OSP-P)}.

First, by employing the definition of the martingale measure $\tilde{N}$, we can establish the equivalence of (\ref{FBSDE-Penalization}) with the following equation:
\begin{align*}
\begin{aligned}
\hat{Y}_t^\lambda &= g({ T}, {X}_{ T}) + \int_t^{T} f\Big(s, {X}_s, \hat{Y}_s^\lambda, \hat{Z}_s^\lambda, \hat{C}_s^\lambda+\Big( g(s,{X}_s) + A_s - \hat{Y}_{s}^\lambda- \hat{C}_{s}^\lambda \Big)^+\Big)ds\\
&\qquad + \int_t^{T}\lambda \max\Big\{ g(s,{X}_s) + A_s - \hat{Y}_{s}^\lambda,  \hat{C}_{s}^\lambda\Big\} ds- \int_t^{ T} \hat{Z}_s^\lambda dW_s - \int_{t}^T \hat{C}_s^\lambda {N}(ds).
\end{aligned}
\end{align*}
Apply the Jacod-Pham decomposition to the generator of the above PBSDE , we obtain
\begin{align*}\label{decompo-func}
&f\Big(t,{X}_{t},y,z,c+\Big(g(t,X_t)+A_t-y-c\Big)^+\Big) +\lambda \max\Big\{ g(t,{X}_t) + A_t - y,  c\Big\} \\
&= \sum_{i \geq 0} h^{i,\lambda}(t,y,z,c, T_{(i)}) \mathbb{I}_t^{i+1},
\end{align*}
where $h^{i,\lambda}(\cdot,y,z,c,\theta_{(i)}) \in L_{\mathbb{F}}^2(0,T;\mathbb{R})$ for any $(y,z,c,\theta_{(i)}) \in \mathbb{R} \times \mathbb{R}^d \times \mathbb{R}\times \Delta_i$. Then, PBSDE \eqref{FBSDE-Penalization} admits the following decomposition.
\begin{theorem}\label{Decompo-Y-lambda}
The solution $(\hat{Y}^\lambda, \hat{Z}^\lambda, \hat{C}^\lambda)$ of the PBSDE with jumps \eqref{FBSDE-Penalization} has the following representation for any $t\in[0,T]$:
\begin{align*}
\hat{Y}^\lambda_t &=\sum_{i \geq 0} \hat{Y}^{i,\lambda}_t (T_{(i)}) \mathbb{I}_t^{i+1},\\
\hat{Z}^\lambda_t &= \hat{Z}^{0,\lambda}_t \mathbb{I}_{\{t \leq T_1\}} + \sum_{i \geq 1} \hat{Z}^{i,\lambda}_t (T_{(i)}) \mathbb{I}_{\{T_i < t \leq T_{i+1}\}},\\
\hat{C}^\lambda_t &= \hat{C}^{0,\lambda}_t \mathbb{I}_{\{t \leq T_1\}} + \sum_{i \geq 1} \hat{C}^{i,\lambda}_t (T_{(i)}) \mathbb{I}_{\{T_i < t \leq T_{i+1}\}},
\end{align*}
with the processes
\begin{align*}
\hat{Y}^{i,\lambda}_t (T_{(i)}) &:= \hat{Y}^{i,\lambda}_t (\theta_{(i)})|_{\theta_{(i)} = T_{(i)}},\\
\hat{Z}^{i,\lambda}_t (T_{(i)}) &:= \hat{Z}^{i,\lambda}_t (\theta_{(i)})|_{\theta_{(i)} = T_{(i)}},\\
\hat{C}^{i,\lambda}_t (T_{(i)}) &:= \hat{C}^{i,\lambda}_t (\theta_{(i)})|_{\theta_{(i)} = T_{(i)}},
\end{align*}
given by the following Brownian motion driven BSDE in the time horizon $[\theta_i\wedge T, T]$ with $ \hat{C}_t^{i,\lambda}(\theta_{(i)}) := \hat{Y}_t^{i+1,\lambda}((\theta_{(i)}, t)) - \hat{Y}_t^{i,\lambda}(\theta_{(i)})$ for $i \geq 0$:
\begin{align*}
\begin{aligned}
\hat{Y}_{t}^{i,\lambda} (\theta_{(i)}) &= g^i(T, \theta_{(i)}) + \int_{t}^{T} {h}^{i,\lambda}(s, \hat{Y}_s^{i,\lambda}(\theta_{(i)}), \hat{Z}_s^{i,\lambda}(\theta_{(i)}), \hat{C}_s^{i,\lambda}(\theta_{(i)}), \theta_{(i)})ds\\
&\qquad - \int_{t}^{T} \hat{Z}_s^{i,\lambda}(\theta_{(i)}) dW_s.
\end{aligned}
\end{align*}
\end{theorem}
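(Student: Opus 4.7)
The plan is to combine the Jacod-Pham decomposition for $\mathbb{G}$-optional and $\mathbb{G}$-predictable processes with a verification argument anchored on the uniqueness part of Lemma \ref{BSDE-p}. Since $\hat{Y}^\lambda$ is c\`adl\`ag $\mathbb{G}$-adapted and $\hat{Z}^\lambda,\hat{C}^\lambda$ are $\mathbb{G}$-predictable, Jacod's theorem (as used in \cite{pham2010stochastic}) immediately gives representations of the stated form, with each $\hat{Y}^{i,\lambda}(\theta_{(i)}),\hat{Z}^{i,\lambda}(\theta_{(i)}),\hat{C}^{i,\lambda}(\theta_{(i)})$ $\mathbb{F}$-measurable in the appropriate sense for every fixed $\theta_{(i)}\in\Delta_i$. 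The remaining task is twofold: identify the BSDE satisfied by these pieces on $[\theta_i\wedge T,T]$, and verify the explicit formula $\hat{C}^{i,\lambda}_t(\theta_{(i)})=\hat{Y}^{i+1,\lambda}_t((\theta_{(i)},t))-\hat{Y}^{i,\lambda}_t(\theta_{(i)})$.

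First I would substitute the Jacod-Pham decompositions of $\hat{Y}^\lambda,\hat{Z}^\lambda,\hat{C}^\lambda$, together with the decomposition of $g(t,X_t)+A_t$ and of the generator (which is precisely $\sum_{i\geq 0}h^{i,\lambda}(t,\cdot,T_{(i)})\mathbb{I}^{i+1}_t$), into the PBSDE \eqref{FBSDE-Penalization} and localize to the stochastic interval $(T_i\wedge T,T_{i+1}\wedge T]$. On this interval no Poisson jump occurs, so $\int \hat{C}^\lambda_s\tilde{N}(ds)$ contributes only the compensator, and the equation collapses to a Brownian-driven BSDE whose driver is exactly $h^{i,\lambda}$ evaluated at the local pieces. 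A standard parameter-freezing measurability argument, valid because the Lipschitz constants inherited from $f$ are uniform in $\theta_{(i)}$, then upgrades the identity from the random path $T_{(i)}$ to every deterministic $\theta_{(i)}\in\Delta_i$, yielding the displayed BSDE on $[\theta_i\wedge T,T]$ with terminal value $g^i(T,\theta_{(i)})$.

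Second, to identify $\hat{C}^{i,\lambda}$, I would use that the jump $\Delta\hat{Y}^\lambda_{T_{i+1}}$ equals $\hat{C}^\lambda_{T_{i+1}}$ and that on either side of $T_{i+1}$ the Jacod-Pham representation switches index. Matching the two representations across the jump gives
\[
\hat{C}^{i,\lambda}_{T_{i+1}}(T_{(i)})=\hat{Y}^{i+1,\lambda}_{T_{i+1}}((T_{(i)},T_{i+1}))-\hat{Y}^{i,\lambda}_{T_{i+1}}(T_{(i)}),
\]
and the same parameter-freezing argument promotes this to the claimed pointwise formula for all $t\in[\theta_i\wedge T,T]$. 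To make everything rigorous I would read the construction backward in $i$: since the number of Poisson arrivals in $[0,T]$ is a.s.\ finite, define $\hat{Y}^{i,\lambda}(\theta_{(i)})$ for $i$ large as the Brownian BSDE solution with driver $h^{i,\lambda}$ and terminal data $g^i(T,\theta_{(i)})$, then proceed inductively to smaller $i$, feeding the already-constructed $\hat{Y}^{i+1,\lambda}$ into the definition of $\hat{C}^{i,\lambda}$ which in turn enters $h^{i,\lambda}$. Reassembling these pieces via the Jacod-Pham formulas yields a $\mathbb{G}$-adapted triple solving \eqref{FBSDE-Penalization}, and Lemma \ref{BSDE-p} forces agreement with $(\hat{Y}^\lambda,\hat{Z}^\lambda,\hat{C}^\lambda)$.

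The main obstacle is the intertwined nature of the system: $\hat{C}^{i,\lambda}$ is defined via $\hat{Y}^{i+1,\lambda}$ on an enlarged parameter space, and it re-enters the generator $h^{i,\lambda}$ of the BSDE for $\hat{Y}^{i,\lambda}$. Rather than a simultaneous fixed-point, the backward recursion above is what resolves the coupling, provided one checks jointly measurable dependence of the solution map $\theta_{(i)}\mapsto\hat{Y}^{i,\lambda}(\theta_{(i)})$ so that substitution by the Poisson times is legitimate. The only substantive technical point is that the Lipschitz property of $f$ propagates uniformly in $\theta_{(i)}$ to $h^{i,\lambda}$, which follows from the explicit form of the penalized driver together with Assumptions \ref{(H1)}--\ref{(H2)}.
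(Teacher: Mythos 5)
Your first paragraph is the right strategy and is essentially the route the paper intends: the triple $(\hat{Y}^\lambda,\hat{Z}^\lambda,\hat{C}^\lambda)$ already exists and is unique by Lemma \ref{BSDE-p}; the Jacod--Pham decomposition is applied to this \emph{existing} solution; the Brownian BSDE on each interval is read off by substituting the decompositions into \eqref{FBSDE-Penalization} and localizing to $(T_i\wedge T,T_{i+1}\wedge T]$; and the formula for $\hat{C}^{i,\lambda}$ comes from matching $\Delta\hat{Y}^\lambda_{T_{i+1}}=\hat{C}^\lambda_{T_{i+1}}$ across the jump. Note, though, that your matching argument by itself only identifies $\hat{C}^{i,\lambda}$ at the single random time $T_{i+1}$; upgrading this to an a.e.-$t$ identity on $(\theta_i\wedge T,T]$ requires the conditional density of $T_{i+1}$ given $\mathcal{G}_{T_i}$ (the ``representation formula of the jump component'' mentioned in the introduction), not merely ``parameter freezing''; this step should be made explicit.

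The genuine gap is the backward recursion you invoke to ``make everything rigorous'' and to ``resolve the coupling'': it cannot be initialized. Although $N_T<\infty$ a.s., $N_T$ is unbounded, so the family $\{\hat{Y}^{i,\lambda}(\theta_{(i)})\}_{i\ge 0}$ must be defined for \emph{every} $i$ and every $\theta_{(i)}\in\Delta_i$ with $\theta_i\le T$. Since the driver $h^{i,\lambda}$ genuinely depends on $c$, and $c$ is to be evaluated at $\hat{C}^{i,\lambda}_t(\theta_{(i)})=\hat{Y}^{i+1,\lambda}_t((\theta_{(i)},t))-\hat{Y}^{i,\lambda}_t(\theta_{(i)})$, the equation at level $i$ requires $\hat{Y}^{i+1,\lambda}$ on an enlarged parameter set, for every $i$; there is no largest index at which one can ``define $\hat{Y}^{i,\lambda}$ for $i$ large as the Brownian BSDE solution with driver $h^{i,\lambda}$'' without already knowing the next level. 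The system of Brownian BSDEs is not closed and the induction has no base case. This is exactly the content of the remark immediately following Theorem \ref{Decompo-Y-lambda}: the backward recursive construction works in \cite{pham2010stochastic}, \cite{jiao2013}, \cite{kharroubi2014progressive} only because those settings have a fixed finite number of jumps, and the paper explicitly disclaims it here. Consequently your final verification step (reassembling the constructed pieces and invoking uniqueness from Lemma \ref{BSDE-p}) must also be dropped; uniqueness lives at the level of the global PBSDE with jumps, and the decomposed BSDEs are a property \emph{satisfied by} the pieces of the known solution, not a device for constructing it. What remains --- your first paragraph together with the density argument for $\hat{C}^{i,\lambda}$ --- is the correct and sufficient proof.
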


{\gl
\begin{proof}
First, we have the following decomposition for $\hat{Y}^{\lambda}
_t$ according to the Jacod-Pham decomposition:
\begin{align*}
\hat{Y}^{\lambda}_t &= \hat{Y}^{\lambda}_0
+ \sum_{i\geq 0} \left( \hat{Y}^{\lambda}_{t\wedge T_{i+1}-}
- \hat{Y}^{\lambda}_{t\wedge T_{i}} \right)
+ \sum_{i\geq 0} \left( \hat{Y}^{\lambda}_{t\wedge T_{i+1}}
- \hat{Y}^{\lambda}_{t\wedge T_{i+1}-} \right) \\
&= \hat{Y}^{\lambda}_0
+ \sum_{i\geq 0} \left( \hat{Y}^{i,\lambda}_{t\wedge T_{i+1}}(T_{(i)})
- \hat{Y}^{i,\lambda}_{t\wedge T_{i}}(T_{(i)}) \right) \\
&\quad + \sum_{i\geq 0} \left( \hat{Y}^{i+1,\lambda}_{T_{i+1}}(T_{(i+1)})
- \hat{Y}^{i,\lambda}_{T_{i+1}}(T_{(i)}) \right)
\mathbb{I}_{\{T_{i+1}\leq t\}} \\
&= \hat{Y}^{\lambda}_0 + (I) + (II).
\end{align*}

For each term in (I), using the BSDE for $\hat{Y}^{i,\lambda}(\theta_{(i)})$, we obtain
\begin{align*}
&\hat{Y}^{i,\lambda}_{t\wedge T_{i+1}}(\theta_{(i)})
- \hat{Y}^{i,\lambda}_{t\wedge T_{i}}(\theta_{(i)}) \\
&\quad = -\int^{t\wedge T_{i+1}}_{t\wedge T_{i}}
    {h}^{i,\lambda} \big( s, \hat{Y}_s^{i,\lambda}(\theta_{(i)}),
    \hat{Z}_s^{i,\lambda}(\theta_{(i)}),
    \hat{C}_s^{i,\lambda}(\theta_{(i)}), \theta_{(i)} \big) ds \\
&\qquad + \int^{t\wedge T_{i+1}}_{t\wedge T_{i}}
    \hat{Z}_s^{i,\lambda}(\theta_{(i)}) dW_s.
\end{align*}
By summing up over $i\geq0$, we further obtain
\begin{align*}
(I)&=-\int_0^t f\Big(s, {X}_s, \hat{Y}_s^\lambda, \hat{Z}_s^\lambda, \hat{C}_s^\lambda+\Big( g(s,{X}_s) + A_s - \hat{Y}_{s}^\lambda- \hat{C}_{s}^\lambda \Big)^+\Big)ds\\
&\qquad + \int_t^{T}\lambda \max\Big\{ g(s,{X}_s) + A_s - \hat{Y}_{s}^\lambda,  \hat{C}_{s}^\lambda\Big\} ds- \int_t^{ T} \hat{Z}_s^\lambda dW_s.
\end{align*}
On the other hand,  for term (II), using the definition of $\hat{C}^{i,\lambda}$, we have
\begin{align*}
(II)&=\int_0^{t}\sum_{i\geq 0}\mathbb{I}_{\{T_i<s\leq T_{i+1}\}}\left( \hat{Y}^{i+1,\lambda}_{s}((T_{(i)},s))
- \hat{Y}^{i,\lambda}_{s}(T_{(i)}) \right)N(ds)\\\
&=\int_0^t\sum_{i\geq 0}\mathbb{I}_{\{T_i<s\leq T_{i+1}\}}\hat{C}_s^{i,\lambda}(T_{(i)})ds=\int_0^t\hat{C}^{\lambda}_sN(ds).
\end{align*}
The conclusion then follows by adding up the above two terms (I) and (II).
\end{proof}
}

\begin{remark}\label{recursive-nonre}
When $f(t,x,y,z,c) = f(t,x)$, $h(t,x)=0$ and all the coefficients in \eqref{FBSDE-Stopping} are $\mathbb{F}$-adapted, the solution ${X}$ of the forward equation in \eqref{FBSDE-Stopping} will be $\mathbb{F}$-adapted and the Jacod-Pham decomposition of $g(t, {X}_t)$ will be trivial with $g^i(t,\theta_{(i)}) = g(t, {X}_t)$. Then,
\eqref{FBSDE-Penalization} degenerates to a standard PBSDE on $\left(\Omega,\mathcal{F}, \mathbb{F},\mathbb{P}\right)$, namely,
\begin{align}\label{FBSDE-Penalization-2}
\begin{aligned}
%
{Y}_t^{\lambda} &= g({ T}, {X}_{ T}) + \int_t^{T} f(s, {X}_s)ds + \int_t^{ T} \lambda\left( g(s,{X}_s)-{Y}_s^{\lambda} \right)^+ ds\\
&\qquad- \int_t^{ T} {Z}_s^{\lambda} dW_s,\quad t\in[0, T].
\end{aligned}
\end{align}
\end{remark}

\begin{remark} Unless there are only a finite number of jumps as considered in \cite{pham2010stochastic}, \cite{jiao2013}, and \cite{kharroubi2014progressive}, so one can solve the sequence of Brownian motion driven BSDEs recursively in a backward way, the Jacod-Pham decomposition in Theorem \ref{Decompo-Y-lambda} does not help to solve the PBSDE with jumps \eqref{FBSDE-Penalization}, as the sequence of Brownian motion driven BSDEs is not a closed system therein. Instead, the solvability of \eqref{FBSDE-Penalization} is provided in Lemma \ref{BSDE-p}, and we will utilize the Jacod-Pham decomposition to establish the connection between \eqref{FBSDE-Penalization} and the recursive optimal stopping problem \ref{(R-OSP-P)}.
\end{remark}

\subsection{Main result}
Suppose that $f(t,x,y,z,c)$ is convex in $(y,z,c)$. The concave case can be treated in a similar way.  Introduce the convex dual of the generator $f$:
\begin{equation*}
    f^*(t,X_t,\alpha,\beta,\gamma)=\sup_{(y,z,c)\in\mathbb{R}\times\mathbb{R}^{d}\times\mathbb{R}}\Big\{\alpha y+
    \beta z+\gamma c - f(t,X_t,y,z,c)\Big\}.
\end{equation*}
Since $f(t,x,y,z,c)$ is continuous and convex in $(y,z,c)$,  by the Fenchel-Moreau theorem, we have
\begin{equation*}
    f(t,X_t,y,z,c)=\sup_{(\alpha,\beta,\gamma)\in D(t,\omega)}\Big\{
    \alpha y+\beta z+\gamma c - f^*(t,X_t,\alpha,\beta,\gamma)\Big\},
\end{equation*}
where $D(t,\omega)=\{(\alpha,\beta,\gamma)\in\mathbb{R}\times\mathbb{R}^d\times\mathbb{R}: f^*(t,X_t,\alpha,\beta,\gamma)<\infty \}$ is the effective domain/barrier cone of $f^*$. For $(\alpha,\beta, \gamma)\in D(t,\omega)$, we further introduce
\begin{equation*}
    f^{\alpha,\beta,\gamma}(t,X_t,y,z,c)=\alpha y+\beta z+\gamma c - f^*(t,X_t,\alpha,\beta,\gamma).
\end{equation*}
To ensure the validity of the comparison theorem for an auxiliary BSDEs introduced later on, we make an additional assumption:
\begin{assumption}\label{(H4)}
The effective domain $D(t,\omega)$ is a compact set in $\mathbb{R}\times\mathbb{R}^d\times[-\lambda,\infty)$.
\end{assumption}

We establish the following main result of the paper, and will prove it in the rest of the paper.
\begin{theorem}\label{result-zc-nonlinear}
Suppose that \cref{(H1),(H2),(H3)} hold and the generator $f(t,x,y,z,c)$ of BSDE  \eqref{FBSDE-Stopping} is convex in $(y,z,c)$, with the effective domain of its convex dual satisfying Assumption \ref{(H4)}. Then,  the value function for \cref{(R-OSP-P)} is given by
\begin{align}\label{eqresult-zc-nonlinear}
V^\lambda_t = \hat{Y}_t^\lambda = \sup_{\tau\in \mathcal{R}_{T_i}(\lambda)} \tilde{Y}_t^\tau , \quad \text{conditional on } \{T_{i-1} \leq t < T_i\}, \quad i \geq 1,
\end{align}
where $\hat{Y}^\lambda_t$ and $\tilde{Y}^\tau_t$ are the unique solutions of \eqref{FBSDE-Penalization} and \eqref{FBSDE-Stopping}, respectively. Moreover, the optimal stopping time is given by
\begin{align}\label{Opti-Stop}
\hat{\tau}^*_{t} := \inf \Big\{T_n >t \,\Big|\, \hat{Y}^\lambda_{T_n} \leq g(T_n, {X}_{T_n})\Big\} \wedge T.
\end{align}
\end{theorem}

\begin{remark}\label{finite-terms}
By partitioning the sample space $\Omega$ using the Poisson stopping times $\{T_i\}_{i\geq 0}$, we can deduce that
\begin{align*}
\hat{Y}_t^\lambda = \sum_{i \geq 1} \mathbb{I}_t^i \hat{Y}_t^\lambda = \sum_{i \geq 1} \mathbb{I}_t^i \sup_{\tau\in \mathcal{R}_{T_i}(\lambda)} \tilde{Y}_t^\tau ,
\end{align*}
Furthermore, the representation \eqref{eqresult-zc-nonlinear} can be expressed more concisely using $\mathcal{R}_{t+}(\lambda)$:
\begin{align*}
\hat{Y}_t^\lambda = \sup_{\tau\in \mathcal{R}_{t+}(\lambda)} \tilde{Y}_t^\tau , \quad t\in[0,T).
\end{align*}
\end{remark}

\begin{remark}\label{degenerate-A0-nozc}
If we make additional assumptions that $f$ satisfies the conditions stated in \cref{recursive-nonre}, Theorem \ref{result-zc-nonlinear} yields the following result, consistent with \cite[Theorem 1.2]{liang2015stochastic}:
\begin{align*}
{Y}_t^{\lambda} = \sup_{\tau\in \mathcal{R}_{T_i}(\lambda)} \tilde{Y}_t^\tau , \quad \text{conditional on } \{T_{i-1} \leq t < T_i\}, \quad i \geq 1,
\end{align*}
where $({Y}^{\lambda}, {Z}^{\lambda})$ is the unique solution of \eqref{FBSDE-Penalization-2}.
\end{remark}

\section{Proof of Theorem \ref{result-zc-nonlinear}}
{\gl We prove Theorem \ref{result-zc-nonlinear} in three steps, each building on the previous one. We begin with Step 1, where the generator \( f \) is independent of the hedging processes \( Z \) and \( C \). We then extend the analysis to Step 2, the linear case, and finally to Step 3, the convex case. Each step requires different techniques.
}

\subsection{Representation for the case with generator independent of \texorpdfstring{{\boldmath$Z$}}{Z} and \texorpdfstring{{\boldmath$C$}}{C}}
\label{sec:repre-noZC}

In this section, we study \cref{(R-OSP-P)} when the generator of the recursive objective functional  \eqref{FBSDE-Stopping} is independent of the hedging processes $Z$ and $C$, i.e. $f(t,x,y,z,c) = f(t,x,y)$.
In such a case,
PBSDE \eqref{FBSDE-Penalization}  simplifies to the following form:
\begin{align}\label{FBSDE-Penalization-nozc}
\begin{aligned}
\hat{Y}_t^\lambda &= g({ T}, {X}_{ T}) + \int_t^{T} \Big[f(s, {X}_s, \hat{Y}_s^\lambda)+\lambda \left( g(s,{X}_s) + {A}_s - \hat{Y}_{s}^\lambda - \hat{C}_{s}^\lambda \right)^+\Big] ds\\
&\qquad- \int_t^{ T} \hat{Z}_s^\lambda dW_s - \int_{t}^T \hat{C}_s^\lambda \tilde{N}(ds),\quad t\in[0, T].
\end{aligned}
\end{align}

The proof of the independent case relies on two key lemmas.

\begin{lemma}\label{lem-recursive-equation}
Suppose that \cref{(H1),(H2),(H3)} hold. Then, the process $\hat{Y}^\lambda$ given by \eqref{FBSDE-Penalization-nozc} is the unique solution of the recursive equation
\begin{align}\label{recursive-equation}
\begin{aligned}
\hat{Y}^\lambda_{t} &= \sum_{i \geq 1} \mathbb{I}_t^i \cdot \mathbb{E}\Bigg[ \int_{t}^{T_i\wedge T} f(s, {X}_s, \hat{Y}^\lambda_s)ds + g(T, {X}_T)\mathbb{I}_{\{T_i > T\}}\\
&\qquad+ \max\Big\{  g(T_i, {X}_{T_i}) , \hat{Y}^\lambda_{T_i} \Big\}\mathbb{I}_{\{T_i\leq T\}} \Bigg| \mathcal{G}_{t}\Bigg].
\end{aligned}
\end{align}
\end{lemma}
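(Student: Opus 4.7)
The plan is to verify that $\hat{Y}^\lambda$ from \eqref{FBSDE-Penalization-nozc} satisfies the recursive equation \eqref{recursive-equation} by localising the PBSDE between consecutive Poisson times and taking conditional expectations, and then to handle uniqueness via a Lipschitz/Gronwall argument. Fixing $i \geq 1$ and working on $\{T_{i-1} \leq t < T_i\}$, I would apply \eqref{FBSDE-Penalization-nozc} between $t$ and the stopping time $T_i \wedge T$ and take $\mathbb{E}[\cdot \mid \mathcal{G}_t]$; the Brownian and $\tilde{N}$-integrals vanish as true martingales by the $L^2$-integrability from Lemma \ref{BSDE-p}. After splitting $\hat{Y}_{T_i \wedge T}^\lambda = g(T, X_T)\mathbb{I}_{\{T_i > T\}} + \hat{Y}_{T_i}^\lambda \mathbb{I}_{\{T_i \leq T\}}$, the verification reduces to establishing
\begin{equation*}
\mathbb{E}\Big[\lambda \int_t^{T_i \wedge T} (g(s, X_s) + A_s - \hat{Y}_s^\lambda - \hat{C}_s^\lambda)^+ \, ds \,\Big|\, \mathcal{G}_t\Big] = \mathbb{E}\big[(g(T_i, X_{T_i}) - \hat{Y}_{T_i}^\lambda)^+ \mathbb{I}_{\{T_i \leq T\}} \,\big|\, \mathcal{G}_t\big],
\end{equation*}
since $\hat{Y}_{T_i}^\lambda + (g(T_i, X_{T_i}) - \hat{Y}_{T_i}^\lambda)^+ = \max\{g(T_i, X_{T_i}), \hat{Y}_{T_i}^\lambda\}$ then supplies the remaining $\max$-term.

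This identity follows from the Jacod-Pham decomposition of Theorem \ref{Decompo-Y-lambda}. On the interval $(T_{i-1}, T_i]$ the four processes admit the representations $g(s, X_s) = g^{i-1}(s, T_{(i-1)})$, $A_s = g^i(s, (T_{(i-1)}, s)) - g^{i-1}(s, T_{(i-1)})$, $\hat{Y}_s^\lambda = \hat{Y}_s^{i-1, \lambda}(T_{(i-1)})$, and $\hat{C}_s^\lambda = \hat{Y}_s^{i, \lambda}((T_{(i-1)}, s)) - \hat{Y}_s^{i-1, \lambda}(T_{(i-1)})$, which telescope to $g(s, X_s) + A_s - \hat{Y}_s^\lambda - \hat{C}_s^\lambda = g^i(s, (T_{(i-1)}, s)) - \hat{Y}_s^{i, \lambda}((T_{(i-1)}, s))$; evaluating at $s = T_i$ yields $g(T_i, X_{T_i}) - \hat{Y}_{T_i}^\lambda$. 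Since the unique possible jump of $N$ in $(t, T_i \wedge T]$ is at $T_i$ and only when $T_i \leq T$,
\begin{equation*}
\int_t^{T_i \wedge T} (g + A - \hat{Y}^\lambda - \hat{C}^\lambda)^+ \, dN_s = (g(T_i, X_{T_i}) - \hat{Y}_{T_i}^\lambda)^+ \mathbb{I}_{\{T_i \leq T\}},
\end{equation*}
and splitting $dN_s = \tilde{N}(ds) + \lambda \, ds$ and taking $\mathbb{E}[\cdot \mid \mathcal{G}_t]$ annihilates the $\tilde{N}$-integral (a true martingale by the $L^2$-growth of the integrand), delivering the identity.

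For uniqueness, given two solutions $Y^1, Y^2 \in S^2_\mathbb{G}(0, T; \mathbb{R})$ of \eqref{recursive-equation}, the Lipschitz continuity of $f$ in $y$ (\cref{(H2)}) and the $1$-Lipschitz property of $x \mapsto \max(a, x)$ yield, on $\{T_{i-1} \leq t < T_i\}$,
\begin{equation*}
|Y^1_t - Y^2_t| \leq L \, \mathbb{E}\Big[\int_t^{T_i \wedge T} |Y^1_s - Y^2_s| \, ds + |Y^1_{T_i} - Y^2_{T_i}| \mathbb{I}_{\{T_i \leq T\}} \,\Big|\, \mathcal{G}_t\Big];
\end{equation*}
iterating backward across the Poisson intervals starting from the last $T_i \leq T$ (where the $\max$-term is absent) and applying Gronwall within each interval forces $Y^1 \equiv Y^2$. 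The hard part will be the predictable-version bookkeeping in the second paragraph: evaluating $(g + A - \hat{Y}^\lambda - \hat{C}^\lambda)^+$ correctly at the jump time $T_i$ through the Jacod-Pham representation, and confirming the integrand is square-integrable so that the $\tilde{N}$-stochastic integral is a true rather than merely local martingale.
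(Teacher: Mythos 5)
Your verification argument is correct, and it takes a genuinely different route from the paper's. The paper proves the lemma by decomposing the PBSDE via Theorem \ref{Decompo-Y-lambda} into a family of Brownian-motion-driven BSDEs indexed by $\theta_{(i)}$, applying It\^o's formula to $e^{-\lambda t}\hat{Y}^{i,d}_t(\theta_{(i)})$ to absorb the $-\lambda(y+c)$ part of the penalty, taking $\mathcal{F}_t$-conditional expectations, and then separately computing the conditional expectations $\Theta_1,\Theta_2$ of the candidate right-hand side using the explicit conditional density $\lambda e^{-\lambda(s-t)}$ of $T_i$ given $\{T_{i-1}\leq t<T_i\}$. You instead integrate the PBSDE only up to $T_i\wedge T$, and convert the penalty integral via $\lambda\,ds=dN_s-\tilde N(ds)$ into the single jump contribution at $T_i$; the Jacod--Pham decomposition enters only through the telescoping identity $g+A-\hat Y^\lambda-\hat C^\lambda=g^{i}(s,(T_{(i-1)},s))-\hat Y^{i,\lambda}_s((T_{(i-1)},s))$, which is exactly what the auxiliary process $A$ and the prescription $\hat C^{i-1,\lambda}=\hat Y^{i,\lambda}-\hat Y^{i-1,\lambda}$ are designed to produce, and which evaluates at $T_i$ to the post-jump quantity $g(T_i,X_{T_i})-\hat Y^\lambda_{T_i}$. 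This is cleaner: it avoids the exponential-discounting It\^o step and the density computations entirely, and it makes transparent why the penalty term generates precisely the $\max\{g(T_i,X_{T_i}),\hat Y^\lambda_{T_i}\}$ in the recursion. The integrability you flag is indeed available ($g(\cdot,X)\in S^2_{\mathbb G}$, $A\in L^2_{\mathbb G}$, $\hat Y^\lambda\in S^2_{\mathbb G}$, $\hat C^\lambda\in L^{2,p}_{\mathbb G}$ by Lemma \ref{BSDE-p}), so the compensated integral is a true martingale.

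The only soft spot is uniqueness. Your ``iterate backward from the last $T_i\leq T$ and apply Gronwall on each interval'' is not a closed argument as stated: the number of Poisson arrivals in $[0,T]$ is a.s.\ finite but unbounded, the event determining the ``last'' interval is not known at time $t$, and the terminal term $|Y^1_{T_i}-Y^2_{T_i}|\mathbb{I}_{\{T_i\leq T\}}$ couples each interval to the next through a random time, so a per-interval Gronwall does not immediately close. A clean fix is either a contraction in a weighted norm $\sup_t e^{\beta t}\mathbb{E}[|Y^1_t-Y^2_t|^2]$ with $\beta$ large relative to $L$ and $\lambda$, or to rewrite the recursive equation via the martingale representation theorem as a Lipschitz BSDE with jumps (as is done in the proof of Lemma \ref{stopping-recursive equation}) and quote standard uniqueness. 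The paper's own proof, for what it is worth, establishes only that $\hat Y^\lambda$ satisfies \eqref{recursive-equation} and does not address uniqueness either, so this gap does not put you behind the paper; but if you claim uniqueness you should supply one of these two arguments.
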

\begin{proof}
By noting that $\lambda\Big( g(t,X_t) + A_t - y -c \Big)^+ = \lambda\max\Big\{ g(t,X_t) + A_t, y + c \Big\} - \lambda \big(y + c\big) $, and applying the Jacod-Pham decomposition to the generator of PBSDE (\ref{FBSDE-Penalization-nozc}), we obtain
\begin{align*}
\begin{aligned}
f(t,{X}_t, y) &= \sum_{i \geq 0} f^{i,d}(t, y,T_{(i)}) \mathbb{I}_t^{i+1},\\
\lambda\max\Big\{ g(t,{X}_t) + {A}_t, y + c \Big\}
&= \sum_{i \geq 0} \lambda \max\Big\{\big( g^i(t, T_{(i)}) + {A}^i(t, T_{(i)}), y + c\Big\} \mathbb{I}_t^{i+1},
\end{aligned}
\end{align*}
where the process $f^{i,d}(\cdot, \theta_{(i)})\in L_{\mathbb{F}}^2(0,T;\mathbb{R})$ for any $\theta_{(i)} \in \Delta_i$.
Hence, we obtain the decomposition according to \cref{Decompo-Y-lambda}: for any $t\in[0,T]$,
\begin{align}\label{decompo-mainresult-Y}
\begin{aligned}
\hat{Y}^\lambda_t &= \sum_{i \geq 0} \hat{Y}^{i,d}_t (T_{(i)}) \mathbb{I}_t^{i+1},\\
\hat{Z}^\lambda_t &= \hat{Z}^{0,d}_t \mathbb{I}_{\{t \leq T_1\}} + \sum_{i \geq 1} \hat{Z}^{i,d}_t (T_{(i)}) \mathbb{I}_{\{T_i < t \leq T_{i+1}\}},\\
\hat{C}^\lambda_t &= \hat{C}^{0,d}_t \mathbb{I}_{\{t \leq T_1\}} + \sum_{i \geq 1} \hat{C}^{i,d}_t (T_{(i)}) \mathbb{I}_{\{T_i < t \leq T_{i+1}\}},
\end{aligned}
\end{align}
with the processes
\begin{align*}
\hat{Y}^{i,d}_t (T_{(i)}) &:= \hat{Y}^{i,d}_t (\theta_{(i)})|_{\theta_{(i)} = T_{(i)}},\\
\hat{Z}^{i,d}_t (T_{(i)}) &:= \hat{Z}^{i,d}_t (\theta_{(i)})|_{\theta_{(i)} = T_{(i)}},\\
\hat{C}^{i,d}_t (T_{(i)}) &:= \hat{C}^{i,d}_t (\theta_{(i)})|_{\theta_{(i)} = T_{(i)}},
\end{align*}
given by the following Brownian motion driven BSDE in the time horizon $[\theta_i\wedge T, T]$ with $ \hat{C}_s^{i,d}(\theta_{(i)}) := \hat{Y}_s^{i+1,d}((\theta_{(i)}, s)) - \hat{Y}_s^{i,d}(\theta_{(i)})$ for $i\geq 0$:
\begin{align*}
\begin{aligned}
\hat{Y}_{t}^{i,d} (\theta_{(i)}) &= g^i(T, \theta_{(i)}) + \int_{t}^{T} \Big[f^{i,d}(s, \hat{Y}_s^{i,d}(\theta_{(i)}), \theta_{(i)}) + \lambda \Big( g^i(s, \theta_{(i)})+ {A}^i(s, \theta_{(i)})\\
&\qquad - \hat{Y}_s^{i,d}(\theta_{(i)}) - \hat{C}_s^{i,d}(\theta_{(i)}) \Big)^+ + \lambda\hat{C}_s^{i,d}(\theta_{(i)}) \Big] ds - \int_{t}^{T} \hat{Z}_s^{i,\lambda}(\theta_{(i)}) dW_s.
\end{aligned}
\end{align*}

Applying It\^o's formula to $e^{-\lambda t}\hat{Y}_{t}^{i,d} (\theta_{(i)})$, we obtain
\begin{align*}
\hat{Y}_{t}^{i,d} (\theta_{(i)}) &= e^{-\lambda (T-t)}g^i(T, \theta_{(i)})
+ \int_{t}^{T} e^{-\lambda (s-t)}
\Big[ f^{i,d}(s, \hat{Y}_s^{i,d}(\theta_{(i)}), \theta_{(i)})  \\
&\quad + \lambda \max \Big\{ g^i(s, \theta_{(i)})
+ {A}^i(s, \theta_{(i)}), \hat{Y}_s^{i,d}(\theta_{(i)})
+ \hat{C}_s^{i,d}(\theta_{(i)}) \Big\} \Big] ds \\
&\quad - \int_{t}^{T} e^{-\lambda (s-t)} \hat{Z}_s^{i,\lambda}(\theta_{(i)}) dW_s.
\end{align*}
Taking conditional expectation with respect to $\mathcal{F}_t$ on both sides and substituting it into \eqref{decompo-mainresult-Y}, we obtain
\begin{align}\label{decomp-Ylambda-exp}
\begin{aligned}
\hat{Y}^\lambda_t
&= \sum_{i \geq 1} \mathbb{I}_t^{i} \cdot
\mathbb{E} \Bigg[ e^{-\lambda (T-t)} g^{i-1}(T, \theta_{(i-1)}) \\
&\quad + \int_{t}^{T} e^{-\lambda (s-t)}
\Big[ f^{i-1,d}(s,  \hat{Y}_s^{i-1,d}(\theta_{(i-1)}), \theta_{(i-1)}) \\
&\quad + \lambda \max \Big\{ g^{i}(s, (\theta_{(i-1)},s)),
\hat{Y}_s^{i,d}((\theta_{(i-1)},s)) \Big\} \Big] ds
\Bigg| \mathcal{F}_t \Bigg] \Bigg|_{\theta_{(i-1)} = T_{(i-1)}}.
\end{aligned}
\end{align}

{\gl To verify that $\hat{Y}^{\lambda}$ satisfies the recursive equation \eqref{recursive-equation}, we further show that each term on the RHS of \eqref{decomp-Ylambda-exp} is equal to the RHS of \eqref{recursive-equation} on the set $\mathbb{I}_t^i$, $i\geq 1$.

Note that on the set $\mathbb{I}_t^i\cap\mathbb{I}_{\{T_i>T\}}$, we have $g(T,X_T)=g^{i-1}(T,T_{(i-1)})$ by the Jacod-Pham decomposition, and $T_{(i-1)}$ is $\mathcal{G}_t$-measurable. Hence,
\begin{align}\label{Poisson-eq0-3}
\begin{aligned}
\mathbb{I}_t^i \cdot \mathbb{E}\Bigg[ g(T,{X}_T) \mathbb{I}_{\{T_i>T\}} \Bigg| \mathcal{G}_t \Bigg]
&= \mathbb{I}_t^i \cdot \mathbb{E}\Bigg[ g^{i-1}(T, \theta_{(i-1)}) \mathbb{I}_{\{T_i>T\}}  \Bigg| \mathcal{G}_t \Bigg] \Bigg|_{\theta_{(i-1)} = T_{(i-1)}}\\
&= \mathbb{I}_t^i \cdot \mathbb{E}\Bigg[ e^{-\lambda (T-t)} g^{i-1}(T, \theta_{(i-1)})   \Bigg| \mathcal{F}_t \Bigg] \Bigg|_{\theta_{(i-1)} = T_{(i-1)}},
\end{aligned}
\end{align}
where we used the filtration switching formula from $\mathcal{G}_t$ to $\mathcal{F}_t$ (see \cite[Lemma 2.1]{liang2015stochastic}) in the last equality. Moreover, by the Jacod-Pham decomposition, we have
\[
\max\Big\{ g(T_i,{X}_{T_i}), \hat{Y}_{T_i}^\lambda \Big\}=\max\Big\{ g^{i}( T_i, T_{(i)}),\hat{Y}_{T_i}^{i,d}(T_{(i)}) \Big\}.
\]
Since $T_{(i-1)}$ is $\mathcal{G}_t$-measurable, it follows that
\begin{align}\label{Poisson-eq0-2}
\begin{aligned}
&\mathbb{I}_t^i \cdot \mathbb{E}\Bigg[ \max\Big\{ g(T_i,{X}_{T_i}), \hat{Y}_{T_i}^\lambda \Big\} \mathbb{I}_{\{T_i \leq T\}} \Bigg| \mathcal{G}_t \Bigg] \\
&= \mathbb{I}_t^i \cdot \mathbb{E}\Bigg[ \max\Big\{ g^{i}( T_i, (\theta_{(i-1)}, T_i)),\hat{Y}_{T_i}^{i,d}((\theta_{(i-1)}, T_i)) \Big\}  \mathbb{I}_{\{T_i \leq T\}}\Bigg| \mathcal{G}_t \Bigg] \Bigg|_{\theta_{(i-1)} = T_{(i-1)}}\\
&= \mathbb{I}_t^i \cdot \mathbb{E}\Bigg[\int_t^T \lambda e^{-\lambda (s-t)} \max\Big\{ g^{i}( s, (\theta_{(i-1)},s)),\hat{Y}_{s}^{i,d}((\theta_{(i-1)},s)) \Big\}  ds  \Bigg| \mathcal{F}_t \Bigg] \Bigg|_{\theta_{(i-1)} = T_{(i-1)}},
\end{aligned}
\end{align}
where we applied the filtration switching formula from $\mathcal{G}_t$ to $\mathcal{F}_t$ in the last equality. Similarly, we obtain
\begin{align}\label{Poisson-eq2}
\begin{aligned}
&\mathbb{I}_t^i \cdot \mathbb{E}\Bigg[\int_t^{T_i\wedge T} f(s,{X}_s, \hat{Y}^\lambda_s)ds \Bigg| \mathcal{G}_t \Bigg] \\
&= \mathbb{I}_t^i \cdot \mathbb{E}\Bigg[ e^{-\lambda (T-t)} \int_t^{T} f^{i-1,d}(s,  \hat{Y}_s^{i-1,d}(\theta_{(i-1)}), \theta_{(i-1)}) ds \\
&\quad + \int_t^{T} \lambda e^{-\lambda (r-t)} \int_t^{r} f^{i-1,d}(s,  \hat{Y}_s^{i-1,d}(\theta_{(i-1)}), \theta_{(i-1)}) ds dr  \Bigg| \mathcal{F}_t \Bigg] \Bigg|_{\theta_{(i-1)} = T_{(i-1)}}\\
&= \mathbb{I}_t^i \cdot \mathbb{E}\Bigg[\int_t^{ T} e^{-\lambda (s-t)} f^{i-1,d}(s,  \hat{Y}_s^{i-1,d}(\theta_{(i-1)}), \theta_{(i-1)}) ds \Bigg| \mathcal{F}_t \Bigg] \Bigg|_{\theta_{(i-1)} = T_{(i-1)}}.
\end{aligned}
\end{align}
where we used integration by parts on the term
\[
e^{-\lambda (r-t)} \cdot \int_t^r f^{i-1,d}(s,  \hat{Y}_s^{i-1,d}(\theta_{(i-1)}), \theta_{(i-1)}) ds
\]
with respect to \( r \) from \( t \) to \( T \) in the last equality.

Finally, substituting \eqref{Poisson-eq0-3}, \eqref{Poisson-eq0-2}, and \eqref{Poisson-eq2} into \eqref{decomp-Ylambda-exp}, we obtain \eqref{recursive-equation}, which completes the proof.}
\end{proof}

Next, we introduce an auxiliary process in the time interval $[0,T]$:
\begin{align}\label{Auxiliary-process}
\hat{Q}^\lambda_s := \left\{
\begin{aligned}
&\max\Big\{ g(s ,{X}_{s}), \hat{Y}_{s}^\lambda \Big\}, \quad & s = T_i, \quad i \geq 0,\\
&\hat{Y}_{s}^\lambda, \quad & otherwise.
\end{aligned}
\right.
\end{align}
Then, by definition and the recursive equation \eqref{recursive-equation}, we have the dynamic programming equation
\begin{align}\label{recursive-equation-2}
\begin{aligned}
\hat{Q}^\lambda_{T_{i-1}\wedge T}
&= \max \Bigg\{ g(T_{i-1}\wedge T, {X}_{T_{i-1}\wedge T}),
\hat{Y}^{\lambda}_{T_{i-1}\wedge T} \Bigg\} \\
&= \max \Bigg\{ g(T_{i-1}\wedge T, {X}_{T_{i-1}\wedge T}), \\
&\quad \mathbb{E} \Bigg[ \int_{T_{i-1}\wedge T}^{T_i\wedge T}
f(s, {X}_s, \hat{Q}^\lambda_s) ds
+ \hat{Q}^{\lambda}_{{T_i}\wedge T}
\Bigg| \mathcal{G}_{T_{i-1}\wedge T} \Bigg] \Bigg\},
\end{aligned}
\end{align}
for any $i \geq 1$.
\begin{lemma}\label{stopping-recursive equation}
Suppose that \cref{(H1),(H2),(H3)} hold. Consider the following auxiliary optimal stopping problem
\begin{align*}
R^\lambda_t :=\sup_{\tau\in \mathcal{R}_{t}(\lambda)}  \tilde{Y}_{t}^{\tau}=\tilde{Y}_t^{{\tau}^*_t},
\end{align*}
with the optimal stopping time denoted as ${\tau}_t^*$. Then, its value function ${R}_t^\lambda$ is given by
\begin{align}\label{recursive-process}
\begin{aligned}
{R}^\lambda_t &=\hat{Q}^\lambda_t= \sup_{\tau\in \mathcal{R}_{t}(\lambda)}  \tilde{Y}_{t}^{\tau}  , \quad t\in[0,T],
\end{aligned}
\end{align}
where $\hat{Q}^\lambda$ and $\tilde{Y}^\tau$ are the unique solution of \eqref{Auxiliary-process} and \eqref{FBSDE-Stopping}, respectively. Moreover, the optimal stopping time is given by
\begin{align}\label{Opti-Stop-auxiliary}
\tau^*_t := \inf \Big\{T_n\geq t \,\Big|\, \hat{Q}^\lambda_{T_n} = g(T_n, {X}_{T_n})\Big\} \wedge T.
\end{align}
\end{lemma}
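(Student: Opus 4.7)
The plan is to show the two inequalities $\tilde{Y}^{\tau^*_t}_t \leq R^\lambda_t$ and $R^\lambda_t \leq \hat{Q}^\lambda_t$, together with the identity $\tilde{Y}^{\tau^*_t}_t = \hat{Q}^\lambda_t$, by exploiting the recursive equation \eqref{recursive-equation-2} in conjunction with \cref{lem-recursive-equation}. The first step is to observe that $\hat{Q}^\lambda_{\tau^*_t} = g(\tau^*_t, X_{\tau^*_t})$: either $\tau^*_t = T_n$ with $\hat{Q}^\lambda_{T_n} \leq g(T_n, X_{T_n})$, forcing equality because $\hat{Q}^\lambda_{T_n} \geq g(T_n, X_{T_n})$ by definition of the max, or else $\tau^*_t = T$ and the terminal condition $\hat{Y}^\lambda_T = g(T, X_T)$ kicks in.

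For the lower bound, at every Poisson time $T_i \in [t, \tau^*_t)$ the strict inequality $\hat{Q}^\lambda_{T_i} > g(T_i, X_{T_i})$ forces the continuation branch of \eqref{recursive-equation-2} to be active, giving
$$\hat{Q}^\lambda_{T_i} = \mathbb{E}\Big[\int_{T_i}^{T_{i+1}\wedge T} f(s, X_s, \hat{Q}^\lambda_s)\,ds + \hat{Q}^\lambda_{T_{i+1}\wedge T}\,\Big|\,\mathcal{G}_{T_i}\Big].$$
\Cref{lem-recursive-equation} provides the analogous expression at times strictly between Poisson times (where $\hat{Q}^\lambda = \hat{Y}^\lambda$); iterating by the tower property across the a.s.\ finitely many Poisson times in $[t, \tau^*_t]$ produces the global integral equation
$$\hat{Q}^\lambda_s = \mathbb{E}\Big[\int_s^{\tau^*_t} f(r, X_r, \hat{Q}^\lambda_r)\,dr + g(\tau^*_t, X_{\tau^*_t})\,\Big|\,\mathcal{G}_s\Big],\quad s\in[t,\tau^*_t].$$
Taking conditional expectation in the BSDE \eqref{FBSDE-Stopping} with $\tau = \tau^*_t$ yields the same identity with $\hat{Q}^\lambda$ replaced by $\tilde{Y}^{\tau^*_t}$. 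Lipschitz continuity of $f$ and a standard Gronwall argument on the difference then give $\hat{Q}^\lambda_s = \tilde{Y}^{\tau^*_t}_s$ on $[t, \tau^*_t]$, so $\hat{Q}^\lambda_t = \tilde{Y}^{\tau^*_t}_t \leq R^\lambda_t$.

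For the upper bound, fix any $\tau = T_n\wedge T \in \mathcal{R}_t(\lambda)$. Since $\hat{Q}^\lambda_\tau \geq g(\tau, X_\tau) = \tilde{Y}^\tau_\tau$, the max in \eqref{recursive-equation-2} now yields only the inequality $\hat{Q}^\lambda_{T_i} \geq \mathbb{E}[\int_{T_i}^{T_{i+1}\wedge T} f(s,X_s,\hat{Q}^\lambda_s)\,ds + \hat{Q}^\lambda_{T_{i+1}\wedge T}\mid \mathcal{G}_{T_i}]$ at each Poisson time in $[t,\tau)$. Combining again with \cref{lem-recursive-equation} and iterating the tower property gives
$$\hat{Q}^\lambda_s \geq \mathbb{E}\Big[\int_s^{\tau} f(r, X_r, \hat{Q}^\lambda_r)\,dr + g(\tau, X_\tau)\,\Big|\,\mathcal{G}_s\Big], \quad s\in[t,\tau],$$
while $\tilde{Y}^\tau_s$ satisfies the corresponding identity with equality (by taking conditional expectation in \eqref{FBSDE-Stopping}). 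A Gronwall-type argument applied to $D_s := \hat{Q}^\lambda_s - \tilde{Y}^\tau_s$, linearizing $f(r,X_r,\hat{Q}^\lambda_r) - f(r,X_r,\tilde{Y}^\tau_r) = \alpha_r D_r$ with a bounded process $\alpha$ via the Lipschitz hypothesis in \cref{(H2)}, produces $D_s \geq 0$. Taking the supremum over $\tau$ then gives $R^\lambda_t \leq \hat{Q}^\lambda_t$.

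The main obstacle is that $\hat{Q}^\lambda$ does not itself solve a clean BSDE (the max operation at Poisson times creates uncontrolled upward jumps), so one cannot simply invoke the BSDE comparison theorem with jumps directly. Instead, one must carefully iterate \eqref{recursive-equation-2} and \cref{lem-recursive-equation} across the random collection of Poisson times in $[t, \tau]$ (or $[t, \tau^*_t]$) to convert the pointwise recursion into a single integral equation on the whole interval, and only then apply a Gronwall-type comparison argument with the Lipschitz generator $f$.
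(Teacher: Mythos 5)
Your overall strategy matches the paper's: derive a conditional-expectation recursion for $\hat{Q}^\lambda$ across Poisson times, compare it with $\tilde{Y}^\tau$ to get $\hat{Q}^\lambda_t \geq \sup_\tau \tilde{Y}^\tau_t$, and then check that $\tau^*_t$ turns all the slack terms (the max-gaps at Poisson times and the terminal gap $\hat{Q}^\lambda_{\tau^*_t}-g(\tau^*_t,X_{\tau^*_t})$) into zeros, so that uniqueness gives equality. Where you diverge is in how the comparison is executed: the paper applies the martingale representation theorem to rewrite the iterated identity as the BSDE with jumps \eqref{Q-DiscreteForm}, with a nonnegative additive perturbation and terminal value dominating $g(\tau,X_\tau)$, and then invokes the comparison theorem of \cite[Theorem~4.4]{quenez2013bsdes} directly. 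You instead stay in conditional-expectation form and iterate the tower property. Your equality step (Gronwall on $\mathbb{E}|\hat{Q}^\lambda_s-\tilde{Y}^{\tau^*_t}_s|$) is fine, and your observation that $\hat{Q}^\lambda_{\tau^*_t}=g(\tau^*_t,X_{\tau^*_t})$ is exactly the paper's.

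The soft spot is the upper bound. From
\begin{equation*}
D_s \;\geq\; \mathbb{E}\Big[\textstyle\int_s^{\tau}\alpha_r D_r\,dr \,\Big|\,\mathcal{G}_s\Big],\qquad |\alpha_r|\leq L,
\end{equation*}
a Gronwall argument does \emph{not} yield $D_s\geq 0$: bounding $\alpha_r D_r\geq -L|D_r|$ only controls $D_s^-$ by $\mathbb{E}[\int_s^\tau|D_r|\,dr\,|\,\mathcal{G}_s]$, which involves $D^+$ as well and does not close. Sign preservation for a linear(ized) one-sided inequality requires the adjoint/discounting argument: note that your iteration actually holds between any two intermediate times, so $s\mapsto D_s+\int_t^s\alpha_r D_r\,dr$ is a supermartingale; writing its Doob--Meyer decomposition (or, more simply, carrying the explicit nonnegative slack terms $\sum_{s\leq T_{j-1}<\tau}(\hat{Q}^\lambda_{T_{j-1}}-\hat{Y}^\lambda_{T_{j-1}})$ and $\hat{Q}^\lambda_\tau-g(\tau,X_\tau)$ through the iteration) and multiplying by $\Gamma_s=\exp\big(\int_t^s\alpha_r\,dr\big)>0$ gives $\Gamma_t D_t=\mathbb{E}[\Gamma_\tau D_\tau+\int_t^\tau\Gamma_r\,dA_r\,|\,\mathcal{G}_t]\geq 0$. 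This is precisely the content of the linear comparison theorem that the paper invokes after the martingale-representation step, so your route is repairable, but as written ``Gronwall produces $D_s\geq 0$'' is not a valid inference. Note also that your whole scheme relies on $f$ being independent of $z$ and $c$ (so that conditional expectations kill the martingale parts of \eqref{FBSDE-Stopping}); this is indeed the standing assumption of the section, and the paper's remark after the lemma makes the same point.
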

\begin{proof}
The dynamic programming equations \eqref{recursive-equation-2} implies that
\begin{align*}
\begin{aligned}
\hat{Q}^\lambda_{T_{i}\wedge T} &= \hat{Y}_{T_{i}\wedge T}^\lambda + \Big(\hat{Q}^\lambda_{T_{i}\wedge T} - \hat{Y}_{T_{i}\wedge T}^\lambda\Big)\\
&= \mathbb{E}\Big[ \hat{Q}^\lambda_{T_{i+1}\wedge T} + \int_{T_{i}\wedge T}^{T_{i+1}\wedge T} f(s, {X}_s, \hat{Q}^\lambda_s)ds\Big| \mathcal{G}_{T_{i}\wedge T} \Big] + \Big(\hat{Q}^\lambda_{T_{i}\wedge T} - \hat{Y}_{T_{i}\wedge T}^\lambda\Big).
\end{aligned}
\end{align*}
In turn, for any $\tau\in \mathcal{R}_{t}(\lambda)$, we have
\begin{align*}
\begin{aligned}
\hat{Q}^\lambda_{t} &= \hat{Y}_{t}^\lambda + \Big(\hat{Q}^\lambda_{t} - \hat{Y}_{t}^\lambda\Big)\\
&= \sum_{i \geq 1} \mathbb{I}_t^i \cdot \mathbb{E}\Big[ \hat{Q}^\lambda_{T_i\wedge T} + \int_{t}^{T_i\wedge T} f(s, {X}_s, \hat{Q}^\lambda_s)ds\Big| \mathcal{G}_{t} \Big] + \Big(\hat{Q}^\lambda_{t} - \hat{Y}_{t}^\lambda\Big)\\
&= \mathbb{E}\Big[ \hat{Q}^\lambda_{\tau} + \int_{t}^{\tau} f(s, {X}_s, \hat{Q}^\lambda_s)ds\Big| \mathcal{G}_{t} \Big]+ \mathbb{E}\Big[\sum_{t\leq T_{j-1}< \tau} \Big(\hat{Q}^\lambda_{T_{j-1}\wedge T} - \hat{Y}_{T_{j-1}\wedge T}^\lambda\Big)\Big|\mathcal{G}_t\Big].
\end{aligned}
\end{align*}
By the martingale representation theorem, there exist a pair of adapted processes \((\hat{Z}^\tau_s, \hat{C}^\tau_s) \in L_\mathbb{G}^{2,p}(t, T; \mathbb{R}^d) \times L_\mathbb{G}^{2,p}(t, T; \mathbb{R})\) such that
\begin{align}\label{Q-DiscreteForm}
\begin{aligned}
\hat{Q}^\lambda_{t} &= \hat{Q}^\lambda_{\tau} + \int_{t}^{\tau} f(s, {X}_s, \hat{Q}^\lambda_s)ds - \int_{t}^{\tau} \hat{Z}^\tau_sdW_s - \int_{t}^{\tau} \hat{C}^\tau_t\tilde{N}(ds) \\
&\qquad + \mathbb{E}\Big[\sum_{t\leq T_{j-1}< \tau} \Big(\hat{Q}^\lambda_{T_{j-1}\wedge T} - \hat{Y}_{T_{j-1}\wedge T}^\lambda\Big)\Big|\mathcal{G}_t\Big].
\end{aligned}
\end{align}

We compare BSDEs \eqref{FBSDE-Stopping} and \eqref{Q-DiscreteForm}.
Since $\hat{Q}^\lambda_{T_{j-1}\wedge T} - \hat{Y}_{T_{j-1}\wedge T}^\lambda\geq 0$ and $\hat{Q}^\lambda_{\tau}\geq g(\tau, {X}_{\tau})$, we have $\hat{Q}^\lambda_{t} \geq \tilde{Y}_{t}^\tau$, $a.s.$ for any $\tau \in \mathcal{R}_{t}(\lambda)$ by the comparison theorem of BSDEs with jumps \cite[Theorem 4.4]{quenez2013bsdes}. Taking the supremum over $\tau\in \mathcal{R}_{t}(\lambda)$, we obtain $\hat{Q}^\lambda_{t} \geq \sup_{\tau\in \mathcal{R}_{t}(\lambda)} \tilde{Y}_{t}^{\tau}$.

Finally, we prove the equality and show that the stopping time $\tau^*_t$ defined in \eqref{Opti-Stop-auxiliary} is indeed the optimal stopping time of the auxiliary optimal stopping problem. To this end, it is sufficient to show that $\hat{Q}^\lambda_{t} = \tilde{Y}_{t}^{\tau^*_t}$. By \eqref{Auxiliary-process}, we obtain $\hat{Q}^\lambda_{T_{j-1}\wedge T} - \hat{Y}_{T_{j-1}\wedge T}^\lambda= 0$ for $t \leq T_{j-1} <\tau^*_t$ and $\hat{Q}^\lambda_{\tau^*_t} = g(\tau^*_t, {X}_{\tau^*_t})$.  Hence, the uniqueness of the solution for BSDEs with jumps \cite[Theorem 2.1]{wu2003fully} implies that $\hat{Q}^\lambda_{t} = \tilde{Y}_{t}^{\tau^*_t}$, $a.s.$.
\end{proof}
\begin{remark}
This lemma requires that the generator $f$ in the recursive objective functional does not depend on the variables $Z$ and $C$. Otherwise, it is unclear how to represent the variables from the perspective of the optimal stopping problem, and then one cannot connect the pair $(\tilde{Z}^\tau_t,\tilde{C}^\tau_t)$ in BSDE \cref{{FBSDE-Stopping}} and the pair $(\hat{Z}^\lambda_t,\hat{C}^\lambda_t)$ in PBSDE \eqref{FBSDE-Penalization}.
\end{remark}

Based on the above two lemmas, we are ready to complete the proof of the independent case.
\begin{proof}[Proof of \cref{result-zc-nonlinear} — Step 1: Case $f(t,x,y,z,c) = f(t,x,y)$]
\noindent\\
We have $\hat{Q}^\lambda_t = \hat{R}^\lambda_t$ according to \cref{stopping-recursive equation}. Since the process $\tilde{Y}_{t}^{\tau}$ is c\`adl\`ag, it follows that
\begin{align*}
\hat{Q}^\lambda_{t+} = \hat{R}^\lambda_{t+} = \sup_{\tau\in \mathcal{R}_{t+}(\lambda)} \tilde{Y}_{t+}^{\tau} = \sup_{\tau\in \mathcal{R}_{t+}(\lambda)} \tilde{Y}_{t}^{\tau} = V^\lambda_t, \quad t\in[0,T).
\end{align*}
Since the process $\hat{Y}^\lambda$ is also c\`adl\`ag, we apply the recursive equation \eqref{Auxiliary-process} to obtain
\begin{align*}
V^\lambda_t = \hat{Q}^\lambda_{t+} = \hat{Y}^\lambda_{t+} = \hat{Y}^\lambda_{t}, \quad t\in[0,T).
\end{align*}
Moreover, the optimal stopping time $\hat{\tau}^*_t$ satisfies
\begin{align*}
\begin{cases}
\hat{\tau}^*_t = \tau^*_t, \quad &t\in(T_{i-1},T_i), \\
\hat{\tau}^*_{T_{i-1}} = \tau^*_{T_i}, \quad &t=T_{i-1},
\end{cases} \quad i \geq 1.
\end{align*}
Thus, equation \eqref{Opti-Stop} follows from the recursive equation \eqref{Auxiliary-process}.
\end{proof}

\subsection{Representation for the case with generator linear with respect to \texorpdfstring{{\boldmath$Z$}}{Z} and \texorpdfstring{{\boldmath$C$}}{C}}
\label{sec:repre-ZC}

This section extends the independent case to a more general setting where the generator is nonlinear with respect to $X$ and $Y$ but linear with respect to $Z$ and $C$ with the help of an adjoint process. Suppose that $f(t,x,y,z,c) = \rho(t,x,y) + \beta_t z + \gamma_t \lambda c$ where $\gamma_t > - 1$ and $\beta,\,\gamma$ are bounded {$\mathbb{G}$-predictable processes}.
In such a case, the generator of
PBSDE \eqref{FBSDE-Penalization} simplifies to the following form
\begin{align*}
\begin{aligned}
&f(t,{X}_{t},y,z,c+(g(t,X_t)+A_t-y-c)^+) +\lambda \Big( g(t,{X}_t) + A_t - y-c\Big)^+ \\
&= \rho(t,X_t,y)+\beta_tz+\gamma_t\lambda c+(1+\gamma _t)\lambda \Big( g(t,{X}_t) + A_t - y-c\Big)^+,
\end{aligned}
\end{align*}
and therefore PBSDE \eqref{FBSDE-Penalization} simplifies to
\begin{align}\label{FBSDE-Penalization-zc}
\begin{aligned}
%
\hat{Y}_t^\lambda &= g({ T}, {X}_T) + \int_t^{T}\Big[\rho(s, {X}_s, \hat{Y}_s^\lambda) + \beta_s \hat{Z}_s^\lambda + \gamma_s \lambda \hat{C}_s^\lambda\\
&\qquad +\big(1+ \gamma_s\big) \lambda \big(g(s,X_s) + {A}_s -\hat{Y}_{s}^\lambda - \hat{C}_{s}^\lambda)^+\Big]  ds \\
&\qquad - \int_t^{ T} \hat{Z}_s^\lambda dW_s - \int_t^{ T}  \hat{C}_s^\lambda  \tilde{N}(ds),\quad t\in[0, T].
\end{aligned}
\end{align}
It is clear that when $\beta_t = \gamma_t \equiv 0$, the penalized equation \eqref{FBSDE-Penalization-zc} will degenerate to \eqref{FBSDE-Penalization-nozc}.

To prove the linear case,
we first introduce an adjoint process $\Gamma^{\beta,\gamma}$: for any given bounded and $\mathbb{G}$-predictable $(\beta,\,\gamma)$,
\begin{align*}
\left\{
\begin{aligned}
d\Gamma_{t}^{\beta,\gamma} &= \Gamma_{t-}^{\beta,\gamma} \Big(\beta_t dW_t + {\gamma_t} \tilde{N}(dt) \Big), \quad t\in[0,T],\\
\Gamma_{0}^{\beta,\gamma} &= 1.
\end{aligned}
\right.
\end{align*}
According to the Jacod-Pham decomposition, we have
\begin{equation*}
\Gamma_t^{\beta,\gamma}= \sum_{i \geq 0} \Gamma^{i,\beta,\gamma}_t(T_{(i)}) \mathbb{I}_t^{i+1},\quad {\gamma_t=\gamma_t^{0}\mathbb{I}_{\{t\leq T_{1}\}} + \sum_{i\geq 1}\gamma_t^{i}(T_{(i)})\mathbb{I}_{\{T_i<t\leq T_{i+1}\}}} 
\end{equation*}
where $\Gamma^{i,\beta,\gamma}(\theta_{(i)}), \gamma^{i}(\theta_{(i)})\in L_{\mathbb{F}}^2(0,T;\mathbb{R})$ for any $\theta_{(i)}\in \Delta_i$. Moreover, it holds that
\begin{equation}\label{auxilary_relation}
\Gamma_t^{i+1,\beta,\gamma}((T_{(i)},t))=\Gamma_{t}^{i,\beta,\gamma}(T_{(i)})(\gamma_t^{i}(T_{(i)})+1),\quad t\in[T_{i},T_{i+1}].
\end{equation}

Applying It\^{o}'s formula to $\Gamma^{\beta,\gamma}\tilde{Y}^{\tau}$ yields that
\begin{align*}
\begin{aligned}
\Gamma_{t}^{\beta,\gamma} \tilde{Y}_t^\tau &= \Gamma_{\tau}^{\beta,\gamma} \tilde{Y}_{\tau}^\tau + \int_{t\wedge\tau}^{\tau} \Gamma_{r}^{\beta,\gamma} \rho(r,{X}_r,\tilde{Y}_{r}^\tau) dr - \int_{t\wedge\tau}^{\tau} \Gamma_{r}^{\beta,\gamma}\Big( \tilde{Z}_{r}^\tau + \beta_r \tilde{Y}_{r}^\tau\Big) dW_r\\
&\qquad - \int_{t\wedge\tau}^{\tau } \Gamma_{r-}^{\beta,\gamma} \Big[ (1 + \gamma_r) \tilde{C}_{r}^\tau + \gamma_r \tilde{Y}_{r-}^\tau\Big] \tilde{N}(dr), \quad t\in[0,T].
\end{aligned}
\end{align*}
Hence, the triplet
\[
\left\{
\begin{aligned}
\tilde{y}_t^\tau &:= \Gamma_{t}^{\beta,\gamma} \tilde{Y}_{t}^\tau, \\
\tilde{z}_t^\tau &:= \Gamma_{t}^{\beta,\gamma} \left( \tilde{Z}_{t}^\tau + \beta_t \tilde{Y}_{t}^\tau \right), \\
\tilde{c}_t^\tau &:= \Gamma_{t-}^{\beta,\gamma} \left[ (1 + \gamma_t) \tilde{C}_{t}^\tau + \gamma_t \tilde{Y}_{t-}^\tau \right]
\end{aligned}
\right.
\]
satisfies
\begin{align*}
\begin{aligned}
\tilde{y}_{t}^\tau &= \Gamma_{\tau}^{\beta,\gamma}g({\tau}, {{X}_{\tau}}) + \int_{t\wedge\tau}^{\tau} \Gamma_{r}^{\beta,\gamma} \rho(r,{X}_r, (\Gamma_{r}^{\beta,\gamma})^{-1} \tilde{y}_{r}^\tau)dr \\
&\qquad - \int_{t\wedge\tau}^{\tau} {\tilde{z}_r^\tau} dW_r - \int_{t\wedge\tau}^{\tau} \tilde{c}^\tau_r \tilde{N}(dr),\quad t\in[0,T].
\end{aligned}
\end{align*}
The new generator $ f(t,x,y): = \Gamma_{t}^{\beta,\gamma} \rho(t,x,(\Gamma_{t}^{\beta,\gamma})^{-1} y)$ is $\mathbb{G}$-progressively measurable and independent with the variables $z$ and $c$. By \cref{FBSDE-Penalization-nozc} for the independent case,
the value function of the auxiliary problem relevant to the recursive objective functional $\tilde{y}^\tau$, i.e.
\begin{align}\label{eqresult-1}
\begin{aligned}
\hat{y}_{t}^\lambda&= \sup_{\tau\in\mathcal{R}_{t+}(\lambda)} \tilde{y}_{t}^\tau ,
\end{aligned}
\end{align}
is given by
\begin{align}\label{y-lambda}
\begin{aligned}
\hat{y}_t^\lambda &= \Gamma_{T}^{\beta,\gamma}g({ T}, {{X}_T}) + \int_t^T \Big[ \Gamma_{r}^{\beta,\gamma} \rho(r,{X}_r, (\Gamma_{r}^{\beta,\gamma})^{-1} \hat{y}^\lambda_r) + \lambda \Big( \Gamma_{r}^{\beta,\gamma} g(r,{X}_r) + A_r^\Gamma\\
&\qquad - \hat{y}_r^\lambda - \hat{c}_r^\lambda \Big)^+ \Big] dr - \int_t^T \hat{z}^\lambda_r dW_r- \int_t^T \hat{c}_r^\lambda \tilde{N}(dr),
\end{aligned}
\end{align}
and the optimal stopping time is given by
\begin{align}\label{optimalstopping-y}
\begin{aligned}
\hat{\tau}^{*,y}_t :=& \,\inf \Big\{T_n >t \,\Big|\, \hat{y}^\lambda_{T_n} \leq \Gamma_{T_n}^{\beta,\gamma} g(T_n, {X}_{T_n})\Big\} \wedge T.
\end{aligned}
\end{align}
Herein, the auxiliary process $A^\Gamma \in L_{\mathbb{G}}^{2}(0,T;\mathbb{R})$ in PBSDE \eqref{y-lambda} admits the form
\begin{align*}
\begin{aligned}
{A}_r^\Gamma &:=\sum_{i\geq 0} A^{i,\Gamma}(r, T_{(i)}) \mathbb{I}_r^{i+1},\\
\end{aligned}
\end{align*}
where the process $A^{i,\Gamma}(r, T_{(i)}) := A^{i,\Gamma}(r, \theta_{(i)})|_{\theta_{(i)} = T_{(i)}}$ is given by
$$ A^{i,\Gamma}(r, \theta_{(i)}) := \Gamma_r^{i+1,\beta,\gamma}((\theta_{(i)},r))g^{i+1}(r, (\theta_{(i)}, r)) - \Gamma_r^{i,\beta,\gamma}(\theta_{(i)})g^{i}(r, \theta_{(i)})$$ for $i \geq 0$.

Next, by It\^o's formula, we have
\begin{align*}
\left\{
\begin{aligned}
d(\Gamma_{t}^{\beta,\gamma})^{-1} &= (\Gamma_{t-}^{\beta,\gamma})^{-1} \Big(-\beta_t (dW_t-\beta_tdt) - \frac{\gamma_t}{1+\gamma_t} (\tilde{N}(dt)-\gamma_t\lambda dt) \Big), \quad t\in[0,T],\\
(\Gamma_{0}^{\beta,\gamma})^{-1} &= 1.
\end{aligned}
\right.
\end{align*}
In turn, applying It\^o's formula to $(\Gamma^{\beta,\gamma})^{-1} \hat{y}^\lambda$ yields that
\begin{align*}
\begin{aligned}
(\Gamma_{t}^{\beta,\gamma})^{-1} \hat{y}^\lambda_t &= (\Gamma_{T}^{\beta,\gamma})^{-1} \hat{y}^\lambda_T + \int_t^T \Big[ \rho(r,{X}_r,(\Gamma_{r}^{\beta,\gamma})^{-1} \hat{y}^\lambda_r) + \beta_r(\Gamma_{r}^{\beta,\gamma})^{-1} \Big( \hat{z}^\lambda_r - \beta_r \hat{y}^\lambda_r \Big)\\
&\qquad + \lambda\gamma_r (1 + \gamma_r)^{-1} (\Gamma_{r}^{\beta,\gamma})^{-1} \Big( \hat{c}^\lambda_r - \gamma_r \hat{y}^\lambda_r \Big) + \lambda \Big( g(r,{X}_r) + (\Gamma_{r}^{\beta,\gamma})^{-1} A_r^\Gamma \\
&\qquad - (\Gamma_{r}^{\beta,\gamma})^{-1} \hat{y}_{r}^\lambda - (\Gamma_{r}^{\beta,\gamma})^{-1} \hat{c}_{r}^\lambda \Big)^+ \Big] dr - \int_t^T (\Gamma_{r}^{\beta,\gamma})^{-1} \Big( \hat{z}^\lambda_r - \beta_r \hat{y}^\lambda_r\Big) dW_r \\
&\qquad - \int_t^{T} (1 + \gamma_r)^{-1} (\Gamma_{r-}^{\beta,\gamma})^{-1} \Big( \hat{c}^\lambda_r - \gamma_r \hat{y}^\lambda_{r-}\Big) \tilde{N}(dr).
\end{aligned}
\end{align*}
Then, the triplet
\begin{align}\label{zc-Y-y-Y}
\left\{
\begin{aligned}
\hat{Y}_t^\lambda &:= (\Gamma_{t}^{\beta,\gamma})^{-1} \hat{y}_{t}^\lambda, \\
\hat{Z}_t^\lambda &:= (\Gamma_{t}^{\beta,\gamma})^{-1} \big( \hat{z}^\lambda_t - \beta_t \hat{y}^\lambda_t \big) = (\Gamma_{t}^{\beta,\gamma})^{-1} \hat{z}^\lambda_t - \beta_t \hat{Y}^\lambda_t, \\
\hat{C}_t^\lambda &:= (1 + \gamma_t)^{-1} (\Gamma_{t-}^{\beta,\gamma})^{-1} \big( \hat{c}^\lambda_t - \gamma_t \hat{y}^\lambda_{t-} \big) = (1 + \gamma_t)^{-1} \big( (\Gamma_{t-}^{\beta,\gamma})^{-1} \hat{c}^\lambda_t - \gamma_t \hat{Y}^\lambda_{t-} \big)
\end{aligned}
\right.
\end{align}
satisfies BSDE
\begin{align*}
\begin{aligned}
\hat{Y}_t^\lambda &= g({ T}, {X}_T) + \int_t^{T}\rho(s, {X}_s, \hat{Y}_s^\lambda) + \beta_s \hat{Z}_s^\lambda + \gamma_s \lambda \hat{C}_s^\lambda\\
&\qquad +\lambda \Big( g(s,{X}_s) + (\Gamma_{s}^{\beta,\gamma})^{-1} A_s^\Gamma - \hat{Y}_{s}^\lambda - (\Gamma_{s}^{\beta,\gamma})^{-1} \hat{c}_{s}^\lambda \Big)^+  ds \\
&\qquad - \int_t^{ T} \hat{Z}_s^\lambda dW_s - \int_t^{ T}  \hat{C}_s^\lambda  \tilde{N}(ds).
\end{aligned}
\end{align*}
Note that
{\small
\begin{align*}
\begin{aligned}
(\Gamma_{r}^{\beta,\gamma})^{-1} A_r^\Gamma &= \sum_{i \geq 0} \Big(\Gamma^{i, \beta, \gamma}_r (T_{(i)})\Big)^{-1} A^{i,\Gamma}(r, T_{(i)}) \mathbb{I}_r^{i+1},\\
&= \sum_{i \geq 0} \Big[ \frac{\Gamma^{i+1,\beta,\gamma}_r((T_{(i)}, r))}{\Gamma^{i, \beta, \gamma}_r (T_{(i)})} g^{i+1}(r, (T_{(i)}, r)) - g^{i}(r, T_{(i)}) \Big] \mathbb{I}_r^{i+1}\\
&=\sum_{i \geq 0}  \Big[ (\gamma_r^i(T_{(i)})+1) g^{i+1}(r, (T_{(i)}, r)) - g^{i}(r, T_{(i)}) \Big] \mathbb{I}_r^{i+1}\\
&=\sum_{i \geq 0}  \gamma_r^i(T_{(i)}) g^{i+1}(r, (T_{(i)}, r))\mathbb{I}_r^{i+1} +\sum_{i\geq 0}\Big[g^{i+1}(r, (T_{(i)},r))-g^{i}(r, T_{(i)}) \Big] \mathbb{I}_r^{i+1}\\
&= \gamma_r(g(r,X_r)+A_r) + A_r,
\end{aligned}
\end{align*}
}
where we utilized (\ref{auxilary_relation}) in the third inequality. Moreover, by the definition of $\hat{C}^{\lambda}$, we have
\begin{equation*}
(\Gamma_{r}^{\beta,\gamma})^{-1} \hat{c}_{r}^\lambda=(1+\gamma_r)\hat{C}_r^{\lambda}+\gamma_r\hat{Y}^{\lambda}_r.
\end{equation*}
Hence, the triplet $(\hat{Y}^{\lambda}, \hat{Z}^{\lambda},\hat{C}^{\lambda})$ is the unique soultion of PBSDE (\ref{FBSDE-Penalization}) by its solution uniqueness.

\begin{proof}[Proof of \cref{result-zc-nonlinear} — Step 2: Case $f(t,x,y,z,c) = \rho(t,x,y) + \beta_t z + \gamma_t \lambda c$]
\noindent\\ We verify that \eqref{eqresult-1} implies \eqref{eqresult-zc-nonlinear}, and that $\hat{\tau}^*_t$ is the optimal stopping time. Recall that $\Gamma_{t}^{\beta,\gamma} > 0$ and $\Gamma_{0}^{\beta,\gamma} = 1$. Then, we have
\begin{align*}
\begin{aligned}
\Gamma_{t}^{\beta,\gamma} \hat{Y}_{t}^\lambda &= \hat{y}_{t}^\lambda = \sup_{\tau\in\mathcal{R}_{t+}(\lambda)}  \tilde{y}_{t}^\tau  = \sup_{\tau\in\mathcal{R}_{t+}(\lambda)} \Gamma_{t}^{\beta,\gamma} \tilde{Y}_{t}^\tau  = \Gamma_{t}^{\beta,\gamma} \sup_{\tau\in\mathcal{R}_{t+}(\lambda)} \tilde{Y}_{t}^\tau , \quad t\in[0,T),
\end{aligned}
\end{align*}
and the optimal stopping time $\hat{\tau}^{*,y}_t \in \mathcal{R}_{t+}(\lambda)$ for $\tilde{y}_{t}^\tau$ is also optimal for $\tilde{Y}_{t}^\tau$. Substituting \eqref{zc-Y-y-Y} into \eqref{optimalstopping-y}, we obtain $\hat{\tau}^{*,y}_t = \hat{\tau}^{*}_t$.
\end{proof}

\subsection{Representation for the case with convex/concave generator}
\label{sec:repre-ZC-nonlinear}

In this section, we prove the general case of convex/concave generator.

\begin{proof}[Proof of \cref{{result-zc-nonlinear}} — Step 3: Convex case]
\noindent\\    First, due to the convexity assumption on the generator, it is known that the recursive objective functional admits a convex dual representation (see \cite[section 5]{quenez2013bsdes}): $$\tilde{Y}^{\tau}_t=\sup_{(\alpha,\beta,\gamma)\in D} \tilde{Y}_t^{\tau}(\alpha,\beta,\gamma),$$ where $\tau\in\mathcal{R}_{t+}(\lambda)$,
    \begin{align}\label{FBSDE-Stopping-auxiliary}
\begin{aligned}
\tilde{Y}_{t}^\tau(\alpha,\beta,\gamma) &= {g({\tau}, {X}_{\tau})} + \int_{t}^{\tau} f^{\alpha,\beta,\gamma}(s, {X}_s, \tilde{Y}_s^\tau(\alpha,\beta,\gamma), \tilde{Z}_s^\tau(\alpha,\beta,\gamma), \tilde{C}_s^\tau(\alpha,\beta,\gamma))ds \\
&\qquad - \int_{t}^{\tau} \tilde{Z}_s^\tau(\alpha,\beta,\gamma) dW_s- \int_{t}^{\tau} \tilde{C}_s^\tau(\alpha,\beta,\gamma)\tilde{N}(ds),
\end{aligned}
\end{align}
and
{\small
\begin{align*}
D := \left\{\left. (\alpha,\beta,\gamma)\in L_\mathbb{G}^{2,p}(0, T;\mathbb{R}^{d+2})\right| (\alpha_t,\beta_t,\gamma_t)\in D(t,\omega)\,\text{compact in}\, \mathbb{R}\times\mathbb{R}^d\times[-\lambda,\infty)\right\}.
\end{align*}
}

Since the generator $f^{\alpha,\beta,\gamma}(t,X_t,y,z,c)$  of BSDE (\ref{FBSDE-Stopping-auxiliary}) is linear in $(y,z,c)$ with bounded coefficients, we apply \cref{{FBSDE-Penalization}} for the linear case and obtain
$$\hat{Y}_t^{\lambda}(\alpha,\beta,\gamma)=\sup_{\tau\in\mathcal{R}_{t+}(\lambda)}\tilde{Y}_{t}^\tau(\alpha,\beta,\gamma),$$ where
\begin{align}\label{FBSDE-Penalization-zc-apply}
\begin{aligned}
\hat{Y}_t^\lambda(\alpha,\beta,\gamma) &= g({ T}, {X}_T) + \int_t^{T}\Big[f^{\alpha,\beta,\gamma}(s, {X}_s, \hat{Y}_s^\lambda(\alpha,\beta,\gamma), \hat{Z}_s^\lambda(\alpha,\beta,\gamma), \hat{C}_s^\lambda(\alpha,\beta,\gamma))\\
&\qquad +\big(1+ \frac{\gamma_s}{\lambda}\big) \lambda \big(g(s,X_s) + {A}_s -\hat{Y}_{s}^\lambda(\alpha,\beta,\gamma) - \hat{C}_{s}^\lambda(\alpha,\beta,\gamma))^+\Big]  ds \\
&\qquad - \int_t^{ T} \hat{Z}_s^\lambda(\alpha,\beta,\gamma) dW_s - \int_t^{ T}  \hat{C}_s^\lambda(\alpha,\beta,\gamma)  \tilde{N}(ds),\quad t\in[0, T].
\end{aligned}
\end{align}

Observe that the generator of  \eqref{FBSDE-Penalization-zc-apply} satisfies
\cref{(H3)}. Indeed, for any $(t,y,z)\in [0,T]\times \mathbb{R}\times \mathbb{R}^d,\,c$ and $c^\prime\in \mathbb{R}$ with $c \neq c^\prime$, the generator meets the requirements of Assumption \ref{(H3)}:
\begin{align*}
\begin{aligned}
&\frac{\gamma_t c + (\lambda+\gamma_t) (g(t,X_t)+A_t-y-c)^+}{\lambda(c - c^\prime)}\\
&-\frac{\gamma_t c^\prime +(\lambda+\gamma_t) (g(t,X_t)+A_t-y-c^\prime)^+}{\lambda(c - c^\prime)} \geq -1,
\end{aligned}
\end{align*}
so the comparison theorem also holds for \eqref{FBSDE-Penalization-zc-apply}. In turn,
$$Y_t^{\lambda}=\sup_{(\alpha,\beta,\gamma)\in D}\hat{Y}_t^{\lambda}(\alpha,\beta,\gamma),$$
where
\begin{align*}
\begin{aligned}
{Y}_t^\lambda&= g({ T}, {X}_T) + \int_t^{T}\sup_{(\alpha,\beta,\gamma)\in D(s,\omega)}\Big[f^{\alpha,\beta,\gamma}(s, {X}_s, {Y}_s^\lambda, {Z}_s^\lambda, {C}_s^\lambda)\\
&\qquad +\big(1+ \frac{\gamma_s}{\lambda}\big) \lambda \big(g(s,X_s) + {A}_s -{Y}_{s}^\lambda - {C}_{s}^\lambda\big)^+\Big]  ds \\
&\qquad - \int_t^{ T} {Z}_s^\lambda dW_s - \int_t^{ T}  {C}_s^\lambda  \tilde{N}(ds),\quad t\in[0, T].
\end{aligned}
\end{align*}
For any $(s,y,z,c)\in[0,T]\times\mathbb{R}\times\mathbb{R}^d\times\mathbb{R}$, by the Fenchel-Moreau theorem,
\begin{align*}
&\sup_{(\alpha,\beta,\gamma)\in D(s,\omega)}\Big[f^{\alpha,\beta,\gamma}(s, {X}_s, y, z, c)+\big(1+ \frac{\gamma_s}{\lambda}\big) \lambda \big(g(s,X_s) + {A}_s -y - c\big)^+\Big]\\
=&\sup_{(\alpha,\beta,\gamma)\in D(s,\omega)}\Big[\alpha y+\beta z+\gamma (c+(g(s,X_s)+A_s-y-c)^+)-f^*(s,X_s,\alpha,\beta,\gamma)\Big]\\
&+\lambda (g(s,X_s)+A_s-y-c)^+\\
=&f\left(s,X_s,y,z,c+(g(s,X_s)+A_s-y-c)^+\right)+\lambda (g(s,X_s)+A_s-y-c)^+,
\end{align*}
which is the generator of PBSDE (\ref{FBSDE-Penalization}). Subsequently, due to the uniqueness of the solution to PBSDE (\ref{FBSDE-Penalization}), we conclude that $Y_t^{\lambda}=\hat{Y}_t^{\lambda}$. The desired result can then be inferred by observing that
\begin{align*}
\hat{Y}_t^{\lambda}&=\sup_{(\alpha,\beta,\gamma)\in D}\sup_{\tau\in\mathcal{R}_{t+}(\lambda)}\tilde{Y}_{t}^\tau(\alpha,\beta,\gamma)\\
&=\sup_{\tau\in\mathcal{R}_{t+}(\lambda)}\sup_{(\alpha,\beta,\gamma)\in D}\tilde{Y}_{t}^\tau(\alpha,\beta,\gamma)\\
&=\sup_{\tau\in\mathcal{R}_{t+}(\lambda)}\tilde{Y}_{t}^\tau
\end{align*}
\end{proof}

\section{Application to American option pricing in a nonlinear market with Poisson stopping constraints}
\label{sec:appli}

We conclude the paper by solving \cref{ep-2}. Recall that
\begin{align*}
f(t, y, z, c) = & -\underline{r}_t \left(y - \frac{z}{\sigma_t} - \frac{c}{\eta_t}\mathbb{I}_{\{\eta_t\neq 0\}}\right)^+
                  + \overline{r}_t \left(y - \frac{z}{\sigma_t} - \frac{c}{\eta_t}\mathbb{I}_{\{\eta_t\neq 0\}}\right)^- \\
                & - \underline{\mu}^1_t \frac{z^+}{\sigma_t}
                  + \overline{\mu}^1_t \frac{z^-}{\sigma_t}
                  - \underline{\mu}^2_t \left(\frac{c}{\eta_t}\right)^+\mathbb{I}_{\{\eta_t\neq 0\}}
                  + \overline{\mu}^2_t \left(\frac{c}{\eta_t}\right)^-\mathbb{I}_{\{\eta_t\neq 0\}},
\end{align*}
which is Lipchitz continuous and convex in $(y,z,c)$. Its convex dual has the expression
\begin{align*}
f^*(t,\alpha,\beta,\gamma)&=\sup_{(y,z,c)\in\mathbb{R}\times\mathbb{R}\times\mathbb{R}}\Big\{\alpha y+
    \beta z+\gamma c - f(t,y,z,c)\Big\}\\
    &=\delta_{\{-\overline{r}_t\leq \alpha\leq -\underline{r}_t,\ -\overline{\mu}^1_t\leq \alpha+\sigma_t\beta \leq -\underline{\mu}^1_t,\ -\overline{\mu}^2_t\leq \alpha+\eta_t\gamma\leq -\underline{\mu}^2_t\}}(\alpha,\beta,\gamma),
    \end{align*}
where $\delta_A(\cdot)$ is the characteristic function of set $A$, i.e. $\delta_A(x)=0$ for $x\in A$ and $\delta_A(x)=\infty$ for $x\notin A$.
Moreover, the effective domain of $f^*$ is
\[
D(t,\omega) = \left\{
(\alpha,\beta,\gamma) :
\begin{array}{l}
-\overline{r}_t \leq \alpha \leq -\underline{r}_t, \\
-\overline{\mu}^1_t \leq \alpha + \sigma_t \beta \leq -\underline{\mu}^1_t, \\
-\overline{\mu}^2_t \leq \alpha + \eta_t \gamma \leq -\underline{\mu}^2_t \ \text{for} \ \eta_t \neq 0
\end{array}
\right\}.
\]
To guarantee \cref{(H4)} holds, in particular, $\gamma\geq -\lambda$, we further assume that
\begin{equation}\label{parameter_condition}
\frac{\overline{\mu}_t^2 - \underline{r}_t}{\eta_t} \leq \lambda\ \text{for} \ \eta_t > 0; \quad \frac{\underline{\mu}_t^2 - \overline{r}_t}{\eta_t} \leq \lambda\ \text{for} \ \eta_t < 0;
\end{equation}
so the effective domain $D(t,\omega)$ is compact and satisfies \( D(t,\omega) \subset \mathbb{R} \times \mathbb{R} \times [-\lambda, \infty) \).
By applying \cref{result-zc-nonlinear}, the price of the American option in the nonlinear market with Poisson stopping constraints is given by $$\hat{Y}_t^{\lambda}=\sup_{\tau\in\mathcal{R}_{t+}(\lambda)}\tilde{Y}_t^{\tau},$$ where
{\small
\begin{align}\label{exampleBSDE}
\hat{Y}_t^\lambda = & \ g(S^1_T, S^2_T) + \int_t^{T} \Bigg\{
    -\underline{r}_s \left(\hat{Y}_s^{\lambda} - \frac{\hat{Z}_s^{\lambda}}{\sigma_s} - \frac{\hat{C}_s^{\lambda} + \left( g(S^1_s, S^2_s) + A_s - \hat{Y}_s^\lambda - \hat{C}_s^\lambda \right)^+}{\eta_s} \mathbb{I}_{\{\eta_s \neq 0\}} \right)^+\\
& \quad + \overline{r}_s \left(\hat{Y}_s^{\lambda} - \frac{\hat{Z}_s^{\lambda}}{\sigma_s} - \frac{\hat{C}_s^{\lambda} + \left( g(S^1_s, S^2_s) + A_s - \hat{Y}_s^\lambda - \hat{C}_s^\lambda \right)^+}{\eta_s} \mathbb{I}_{\{\eta_s \neq 0\}} \right)^- \notag \\
& \quad - \underline{\mu}_s^1 \frac{(\hat{Z}_s^{\lambda})^+}{\sigma_s} + \overline{\mu}_s^1 \frac{(\hat{Z}_s^{\lambda})^-}{\sigma_s} \notag \\
& \quad - \underline{\mu}_s^2 \left[\frac{\hat{C}_s^{\lambda} + \left( g(S^1_s, S^2_s) + A_s - \hat{Y}_s^\lambda - \hat{C}_s^\lambda \right)^+}{\eta_s}\right]^+ \mathbb{I}_{\{\eta_s \neq 0\}} \notag \\
& \quad + \overline{\mu}_s^2 \left[\frac{\hat{C}_s^{\lambda} + \left( g(S^1_s, S^2_s) + A_s - \hat{Y}_s^\lambda - \hat{C}_s^\lambda \right)^+}{\eta_s}\right]^- \mathbb{I}_{\{\eta_s \neq 0\}} \notag \\
& \quad + \lambda \left( g(S^1_s, S^2_s) + A_s - \hat{Y}_s^\lambda - \hat{C}_s^\lambda \right)^+
\Bigg\} ds \notag \\
& \quad - \int_t^{T} \hat{Z}_s^\lambda dW_s - \int_t^{T} \hat{C}_s^\lambda \tilde{N}(ds), \quad t \in [0, T],\notag
\end{align}
}
with the optimal stopping time given by
\begin{align*}
\hat{\tau}^*_{t} := \inf \Big\{T_n >t \,\Big|\, \hat{Y}^\lambda_{T_n} \leq {g(S^1_{T_n}, S^2_{T_n})}\Big\} \wedge T.
\end{align*}

Next, we examine a specific example of an American put option with staircase-style strike prices and bid-ask interest rates. Consider the payoff function in the form of \( g(S^1, S^2) = (S^2 - S^1)^+ \), where
\[
dS_t^1 = S_t^1 [ (\underline{\mu}^1 \mathbb{I}_{\pi_t^1 \geq 0} - \overline{\mu}^1 \mathbb{I}_{\pi_t^2 < 0}) dt + \sigma dW_t], \quad S_0^1 = s,
\]
for \(\overline{\mu}^1 \geq \underline{\mu}^1\) and \(\sigma > 0\) are constants. Moreover,
\[
dS_t^2 = S_{t-}^2 [\lambda \eta_t dt + \eta_t \tilde{N}(dt)], \quad S_0^2 = 1,
\]
for \(\eta_t = \eta \mathbb{I}_{\{t \leq T_k\}}\) for some integer \( k \geq 1 \), where \(\eta > -1\) is a non-zero constant.
Under the above assumptions,
\[
S_t^2 = \prod_{0<s\leq t}(1+\eta_s\Delta N_s) = \sum_{i=1}^{k}(1+\eta)^{i-1}\mathbb{I}_{t}^i + (1+\eta)^k \mathbb{I}_{\{t \geq T_k\}},
\]
and the payoff can be decomposed into a series of put payoffs with staircase-style strike prices
\[
g(S_t^1, S_t^2) = \sum_{i=1}^k \left[(1+\eta)^{i-1} - S_t^1 \right]^+ \mathbb{I}_{t}^i + \left[(1+\eta)^k - S_t^1 \right]^+ \mathbb{I}_{\{t \geq T_k\}}.
\]


Both the bid and ask interest rates on risk-free bonds are constant, with $\overline{r}_t = \overline{r}$ and $\underline{r}_t = \underline{r}$, where $\overline{r}$ and $\underline{r}$ are two constant values such that $\overline{r} \geq \underline{r}$. For the conditions in (\ref{parameter_condition}) to hold, we further assume that
\begin{equation}\label{parameter_condition_2}
\underline{r} \geq 0 \quad \text{for} \ \eta > 0; \quad \overline{r} \leq 0 \quad \text{for} \ \eta < 0.
\end{equation}

Applying the Jacod-Pham decomposition, we obtain
\begin{align}\label{Example-decom}
\hat{Y}_t^{\lambda} = \sum_{i=0}^{k-1}\hat{Y}_t^{i,\lambda}(T_{(i)})\mathbb{I}_{t}^{i+1} + \hat{Y}_t^{k,\lambda}(T_{(k)})\mathbb{I}_{\{t \geq T_k\}},
\end{align}
where $\hat{Y}^{i,\lambda}(T_{(i)}) = \hat{Y}^{i,\lambda}(\theta_{(i)})|_{\theta_i=T_i}$, for $0\leq i\leq k$, solve the Brownian motion driven BSDEs:
{\small
\begin{align*}
\hat{Y}_t^{k,\lambda}(\theta_{(k)}) = & \ [(1+\eta)^k - S_T^1]^+ + \int_t^{T} \Bigg\{
    -\underline{r}\left(\hat{Y}_s^{\lambda}(\theta_{(k)}) - \frac{\hat{Z}_s^{\lambda}(\theta_{(k)})}{\sigma}\right)^+ \\
& \quad + \overline{r}\left(\hat{Y}_s^{\lambda}(\theta_{(k)}) - \frac{\hat{Z}_s^{\lambda}(\theta_{(k)})}{\sigma}\right)^- \notag \\
& \quad - \underline{\mu}^1 \frac{(\hat{Z}_s^{\lambda}(\theta_{(k)}))^+}{\sigma} + \overline{\mu}^1 \frac{(\hat{Z}_s^{\lambda}(\theta_{(k)}))^-}{\sigma} \notag \\
& \quad + \lambda \left([(1+\eta)^k - S_s^1]^+ - \hat{Y}_s^{k,\lambda}(\theta_{(k)})\right)^+
\Bigg\} ds \notag \\
& \quad - \int_t^{T} \hat{Z}_s^\lambda(\theta_{(k)}) dW_s, \quad t \in [\theta_{k} \wedge T, T], \notag
\end{align*}
}
and for $0 \leq i \leq k-1$,
{\small
\begin{align*}
\hat{Y}_t^{i,\lambda}(\theta_{(i)}) = & \ [(1+\eta)^i - S_T^1]^+ + \int_t^{T} \Bigg\{
    -\underline{r}\left(\hat{Y}_s^{\lambda}(\theta_{(i)}) - \frac{\hat{Z}_s^{\lambda}(\theta_{(i)})}{\sigma} \right. \\
& \left. - \frac{Y_s^{i+1,\lambda}(\theta_{(i+1)}) - Y_s^{i,\lambda}(\theta_{(i)}) + \{[(1+\eta)^{i+1} - S^1_{s}]^+ - Y_s^{i+1,\lambda}(\theta_{(i+1)})\}^+}{\eta}\right)^+ \\
& \quad + \overline{r}\left(\hat{Y}_s^{\lambda}(\theta_{(i)}) - \frac{\hat{Z}_s^{\lambda}(\theta_{(i)})}{\sigma} \right. \\
& \left. - \frac{Y_s^{i+1,\lambda}(\theta_{(i+1)}) - Y_s^{i,\lambda}(\theta_{(i)}) + \{[(1+\eta)^{i+1} - S^1_{s}]^+ - Y_s^{i+1,\lambda}(\theta_{(i+1)})\}^+}{\eta}\right)^- \notag \\
& \quad - \underline{\mu}^1 \frac{(\hat{Z}_s^{\lambda}(\theta_{(i)}))^+}{\sigma} + \overline{\mu}^1 \frac{(\hat{Z}_s^{\lambda}(\theta_{(i)}))^-}{\sigma} \Bigg\} ds \\
& \quad - \int_t^{T} \hat{Z}_s^\lambda(\theta_{(i)}) dW_s, \quad t \in [\theta_{i} \wedge T, T]. \notag
\end{align*}
}

The above sequence of BSDEs are Markovian due to the constant parameter assumptions. Hence, by the Feynman-Kac formula:
\begin{align*}
\hat{Y}_t^{i,\lambda}( \theta_{(i)}) = \hat{V}^{i,\lambda}(t,S^1_t),\quad \hat{Z}_t^{i,\lambda}( \theta_{(i)}) = \sigma S_t^1 \partial_s\hat{V}^{i,\lambda}(t,S^1_t),\quad 0\leq i\leq k,
\end{align*}
where
\begin{equation}
\left\{
\begin{aligned}
\partial_t \hat{V}^{k,\lambda}(t,s) +& \frac{1}{2}\sigma^2 s^2 \partial_{ss}^2 \hat{V}^{k,\lambda}(t,s) - \underline{r}\left(\hat{V}^{k,\lambda}(t,s) - s \partial_s \hat{V}^{k,\lambda}(t,s)\right)^{+} \\
& + \overline{r}\left(\hat{V}^{k,\lambda}(t,s) - s \partial_s \hat{V}^{k,\lambda}(t,s)\right)^{-} \\
& + \lambda\left([(1+\eta)^k - s]^+ - \hat{V}^{k,\lambda}(t,s)\right)^{+} = 0, \\
\hat{V}_T^{k,\lambda}(T,s) = & [(1+\eta)^k - s]^+,
\end{aligned}
\right.
\label{eq:V_k_lambda}
\end{equation}
and for \(0 \leq i \leq k-1\),
\begin{equation}
\left\{
\begin{aligned}
\partial_t \hat{V}^{i,\lambda}(t,s) +& \frac{1}{2}\sigma^2 s^2 \partial_{ss}^2 \hat{V}^{i,\lambda}(t,s) \\
& - \underline{r} \left( \hat{V}^{i,\lambda}(t,s) - s \partial_s \hat{V}^{i,\lambda}(t,s) \right. \\
& \left. - \frac{\hat{V}^{i+1,\lambda}(t,s) - \hat{V}^{i,\lambda}(t,s) + \{[(1+\eta)^{i+1} - s]^+ - \hat{V}^{i+1,\lambda}(t,s)\}^+}{\eta} \right)^{+} \\
& + \overline{r} \left( \hat{V}^{i,\lambda}(t,s) - s \partial_s \hat{V}^{i,\lambda}(t,s) \right. \\
& \left. - \frac{\hat{V}^{i+1,\lambda}(t,s) - \hat{V}^{i,\lambda}(t,s) + \{[(1+\eta)^{i+1} - s]^+ - \hat{V}^{i+1,\lambda}(t,s)\}^+}{\eta} \right)^{-} \\
& = 0, \\
\hat{V}_T^{i,\lambda}(T,s) = & [(1+\eta)^i - s]^+.
\end{aligned}
\right.
\label{eq:V_i_lambda}
\end{equation}

For the recursive sequence of PDEs (\ref{eq:V_k_lambda})-(\ref{eq:V_i_lambda}), the final terms in these equations have natural interpretations, reflecting the decision-making process involving stopping and continuing opportunities. Indeed, in PDE (\ref{eq:V_k_lambda}) for \(t \geq T_k\), the term
\begin{align*}
&\lambda \max\left([(1+\eta)^k - s]^+ - \hat{V}^{k,\lambda}(t,s), 0\right)\\
=&\lambda \max\left([(1+\eta)^k - s]^+ - \hat{V}^{k,\lambda}(t,s), \hat{V}^{k,\lambda}(t,s)-\hat{V}^{k,\lambda}(t,s)\right)
\end{align*}
captures the player's choice between two actions. The player can either stop to secure an immediate payoff of \([(1+\eta)^k - s]^+\), or continue, in which case its own value function \(\hat{V}^{k,\lambda}(t,s)\) is used to determine the outcome.The transition rate
$\lambda$ reflects the cost of postponing a decision, with higher values indicating more frequent transitions between states due to higher postponement costs. This term is consistent with the penalty term in the penalized BSDE (\ref{FBSDE-Penalization-2}).

For PDE (\ref{eq:V_i_lambda}), valid for \(t \in [T_{i-1}, T_i)\), when \(\underline{r} = \overline{r}\), the fraction term simplifies to
\begin{align*}
&\overline{r}\frac{\hat{V}^{i+1,\lambda}(t,s) - \hat{V}^{i,\lambda}(t,s) + \{[(1+\eta)^{i+1} - s]^+ - \hat{V}^{i+1,\lambda}(t,s)\}^+}{\eta}\\
=&\frac{\overline{r}}{\eta} \max\left\{[(1+\eta)^{i+1} - s]^+ - \hat{V}^{i,\lambda}(t,s), \hat{V}^{i+1,\lambda}(t,s) - \hat{V}^{i,\lambda}(t,s)\right\}.
\end{align*}
This term represents the player's choice between two competing outcomes: either stopping to receive the payoff \([(1+\eta)^{i+1} - s]^+\) or continuing to the next interval where the value function \(\hat{V}^{i+1,\lambda}(t,s)\) is evaluated. The factor \(\frac{\overline{r}}{\eta}\) scales the impact of the decision to stop and continue, reflecting the transition rate at the next decision point.

The optimal stopping regions will depend on Poisson stopping intervals with \( k \) different cases. For \( t \geq T_{k-1} \), the optimal stopping time is defined as
\begin{align*}
\hat{\tau}^{*}_{t} :=& \inf \left\{ T_n > t \,\Big|\, \hat{V}^{k,\lambda}(T_n, S_{T_n}) \leq [(1+\eta)^k - S_{T_n}]^+ \right\} \wedge T,
\end{align*}
and the stopping region is defined as
\[
\{ (t, s) \in [T_{k-1} \wedge T, T] \times \mathbb{R}_+ \mid \hat{V}^{k,\lambda}(t, s) = [(1+\eta)^k - s]^+ \}.
\]

In general, for \( t \in [T_{i-1}, T_i) \), \( 1\leq i \leq k-1 \), the optimal stopping time is defined as
\begin{align*}
\hat{\tau}^{*}_{t} :=& \inf \left\{ T_n > t \,\Big|\, \hat{V}^{i,\lambda}(T_n, S_{T_n}) \leq [(1+\eta)^i - S_{T_n}]^+ \right\} \wedge T,
\end{align*}
and the stopping region is defined as

\[
\{ (t, s) \in [T_{i-1} \wedge T, T] \times \mathbb{R}_+ \mid \hat{V}^{i,\lambda}(t, s) = [(1+\eta)^i - s]^+ \}.
\]

In the following, using the fully implicit finite difference method, we provide numerical analysis for the above nonlinear American option pricing model. Unless otherwise stated, the following parameters are used: $\sigma=0.2, \eta=0.1, T = 5, \underline{r}=0.02, \overline{r}=0.05, k=4$.

[Insert Figure \ref{fig:values} here]

Figure \ref{fig:values} demonstrates the asymptotic behaviours of value functions $\hat{V}^{k,\lambda} (k=4)$ and the corresponding free boundaries with respect to $\lambda$. Intuitively, the value functions increases as $\lambda$ rises, as a larger $\lambda$ means more exercise opportunities for the option holder. The left panel of Figure \ref{fig:values} confirms this point. Notably, the left panel indicates the smooth pasting effect when $\lambda$ is very large. This observation implies that the $\hat{V}^{k,\lambda}$ converges to a value function of an optimal stopping model as $\lambda$ goes to $\infty$, which is consistent with conventional American option pricing.
The right panel of Fig \ref{fig:values} illustrates the impact of $\lambda$ on the exercise boundary (free boundary) associated with $\hat{V}^{k,\lambda}$. The exercise boundary divides the whole region into two parts: the upper region is the holding region, and the lower region is the exercise region. In addition to the convergence of the exercise boundary with respect to $\lambda$, this panel shows that the exercise region shrinks as $\lambda$ increases. This phenomenon suggests that more exercise opportunities actually diminish the likelihood of exercising for option holders.

[Insert Figure \ref{fig:simulations} here]

Figure \ref{fig:simulations} compares the value functions and exercise boundaries for different \(i\) values, ranging from \(0\) to \(k\) (\(k=4\)). The upper panel displays the value functions \(\hat{V}^{i,\lambda}\) for \(i = 0\) to \(4\). Notably, the larger the \(i\), the higher the value function, with \(\hat{V}^{0,\lambda}\) being the smallest.
The bottom panel shows a simulation of the optimal stopping time. It presents four exercise boundaries: the lowest corresponds to \(T_1\), the second lowest to \(T_2\), the third lowest to \(T_3\), and the highest to \(T_4\) and beyond. In the simulated sample path, the optimal stopping time is at \(T_4\).

\section{Conclusion and extensions}
\label{sec:concl}
This paper advances the understanding of the recursive optimal stopping problem with Poisson stopping constraints, especially within the context of American option pricing in a nonlinear market. The key methodology employed is the Jacod-Pham decomposition, which enables the derivation of a novel PBSDE representation for the value function and its corresponding optimal stopping time.

{\gl Several extensions of our results are possible. First, extending from the finite horizon setting to the {infinite horizon case} is straightforward by letting \( T = \infty \). However, a {monotonicity condition} on the generator is required to ensure well-posedness of BSDEs, namely,
\begin{equation*}
(f(t,x,y,z,c) - f(t,x,y',z,c)) \cdot (y - y') \leq \lambda (y - y')^2, \quad \lambda < 0,
\end{equation*}
as considered in \cite{Hu2020}. Under this condition, the main representation result in Theorem \ref{result-zc-nonlinear} remains valid.

Another natural extension is from the {recursive optimal stopping} framework in this paper to {recursive optimal switching}, which was first introduced in \cite{liang2015stochastic} for the linear case. Consider the recursive objective functional:
\begin{align*}
{Y}_t^u &= g^{u_T}(T, X_T) + \int_t^T f^{u_s}\left(s, X_s, {Y}_s^u, {Z}_s^u, C_s^u\right) ds
- \int_t^T Z_s^u d W_s - \int_t^T C_s^u \tilde{N}(d s) \\
&\quad - \sum_{t < T_m \leq T} \Delta_{T_m}^{u_{T_m-}, u_{T_m}},
\quad t \in [0, T], \quad u \in \mathcal{K}_0^i(\lambda),
\end{align*}
where \( \Delta^{i,j} \), \( 1 \leq i, j \leq k \), represents the {switching cost} from regime \( i \) to regime \( j \), and the {admissible impulse control set} \( \mathcal{K}_0^i(\lambda) \) is given by
\begin{equation*}
\mathcal{K}_0^i(\lambda) := \left\{ u \in L_{\mathbb{G}}^2(0, T ;\{1,2, \dots, k\}) \mid u_s = i \mathbb{I}_{\{s < T_1\}} + \sum_{m \geq 1} a_m \mathbb{I}_s^{m+1}, a_m \in \mathcal{G}_{T_m} \right\}.
\end{equation*}
Then, similar to Theorem \ref{result-zc-nonlinear}, we obtain the following representation result:
\begin{equation*}
\hat{Y}_0^i = \sup_{u \in \mathcal{K}_0^i} Y_0^u,
\end{equation*}
where \( \hat{Y}^i \) satisfies the {multidimensional PBSDE}:
\begin{align*}
\hat{Y}_t^i &= g^i(T,X_T)
+ \int_t^T \Bigg[ f^i \Big(s, X_s, \hat{Y}_s^i, \hat{Z}_s^i, \hat{C}_s^i
+ \big(\mathcal{M} \hat{Y}_s^i + \mathcal{A} \hat{Y}_s^i - \hat{Y}_s^i - \hat{C}_s^i\big)^{+} \Big) \\
&\quad + \lambda \big(\mathcal{M} \hat{Y}_s^i + \mathcal{A} \hat{Y}_s^i
- \hat{Y}_s^i - \hat{C}_s^i\big)^{+} \Bigg] ds \\
&\quad - \int_t^T \hat{Z}_s^i dW_s - \int_t^T \hat{C}_s^i \tilde{N}(ds),
\quad t \in [0, T], \quad i \in \{1,2, \dots, k\}.
\end{align*}
with the {impulse control operator}:
\begin{equation}
\mathcal{M} \hat{Y}_t^i := \max _{j \neq i} \left\{\hat{Y}_t^j - \Delta_t^{i, j} \right\}.
\end{equation}
Using the {Jacod-Pham decomposition}, this can be rewritten as
\begin{equation}
\mathcal{M} \hat{Y}_t^i = \sum_{j \geq 0} \mathcal{M} \hat{Y}_t^{j, i}(T_{(j)}) \mathbb{I}_t^{j+1},
\end{equation}
where \( \mathcal{M} \hat{Y}_t^{j, i} (T_{(j)}) \in L_{\mathbb{F}}^2(0, T ; \mathbb{R}) \) for \( \theta_{(j)} \in \Delta_j \).  The auxiliary process \( \mathcal{A} \hat{Y}^i \) is then defined as
\begin{equation}
\mathcal{A} \hat{Y}_t^i := \mathcal{A} \hat{Y}_t^{0, i} \mathbb{I}_{\{t < T_1\}} + \sum_{j \geq 1} \mathcal{A} \hat{Y}_t^{j, i}(T_{(j)}) \mathbb{I}_t^{j+1},
\end{equation}
where
\begin{equation}
\mathcal{A} \hat{Y}_t^{j, i}(T_{(j)}) := \left. \mathcal{A} \hat{Y}_t^{j, i}(\theta_{(j)}) \right|_{\theta_{(j)}=T_{(j)}},
\end{equation}
with
\begin{equation}
\mathcal{A} \hat{Y}_t^{j, i}(\theta_{(j)}) := \mathcal{M} \hat{Y}_t^{j+1, i}(\theta_{(j)}, t) - \mathcal{M} \hat{Y}_t^{j, i}(\theta_{(j)}), \quad j \geq 0.
\end{equation}

We leave the detailed proofs of these two extensions to the interested reader.

Finally, an intriguing yet more {challenging} extension would be to consider the {risk-sensitive case}, where the PBSDE transforms into a {quadratic BSDE with jumps}. Such a formulation arises naturally in {stochastic control under risk-sensitive criteria}, and its study is significantly more involved. For related work on {risk-sensitive Poisson Dynkin games}, see \cite{Du2025}.  }


\section*{Acknowledgment}

We are grateful to the editor and the referee whose
constructive comments led to a substantial
enhancement of the paper.

\bibliographystyle{plain}


\begin{thebibliography}{}

\bibitem{bayraktar2011optimal}
Erhan Bayraktar and Song Yao.
\newblock Optimal stopping for non-linear expectations--Part II.
\newblock {\em Stochastic Processes and their Applications}, 121(2):212--264, 2011.

\bibitem{DeAngelis2022recursive}
Katia Colaneri and Tiziano De Angelis.
\newblock A class of recursive optimal stopping problems with applications to stock trading.
\newblock {\em Mathematics of Operations Research}, 47(3):1833--1861, 2022.

\bibitem{cvitanic1996backward}
Jaksa Cvitanic and Ioannis Karatzas.
\newblock Backward stochastic differential equations with reflection and Dynkin games.
\newblock {\em The Annals of Probability}, 24(4):2024--2056, 1996.

\bibitem{Du2025}
Kai Du, Gechun Liang, Haodong Sun and Zhen Wu. \newblock Risk-sensitive Dynkin games with heterogeneous Poisson random intervention times.
\newblock {\em arXiv:2008.01787}.

\bibitem{Dumitrescu}
Roxana Dumitrescu and Marie-Claire Quenez and Agnès Sulem.
\newblock A weak dynamic programming principle for combined optimal stopping/stochastic control with {$\mathcal{E}^f$}-expectations.
\newblock {\em SIAM Journal on Control and Optimization}, 54(4):2090--2115, 2016.

\bibitem{Dumitrescu2}
Roxana Dumitrescu.
\newblock An optional decomposition of $\mathcal{Y}^{g,\xi}-submartingales$ and applications to the hedging of American options in incomplete markets.
\newblock {\em arXiv:1901.02505}.

\bibitem{dupuis2002optimal}
Paul Dupuis and Hui Wang.
\newblock Optimal stopping with random intervention times.
\newblock {\em Advances in Applied Probability}, 34(1):141--157, 2002.

\bibitem{el1997reflected}
Nicole El Karoui, Christophe Kapoudjian, Étienne Pardoux, Shige Peng, and Marie-Claire Quenez.
\newblock Reflected solutions of backward SDEs, and related obstacle problems for PDEs.
\newblock {\em The Annals of Probability}, 25(2):702--737, 1997.

\bibitem{grigorova2017reflected}
Miryana Grigorova, Peter Imkeller, Elias Offen, Youssef Ouknine, and Marie-Claire Quenez.
\newblock Reflected BSDEs when the obstacle is not right-continuous and optimal stopping.
\newblock {\em The Annals of Applied Probability}, 27(5):3153--3188, 2017.

\bibitem{grigorova2020optimal}
Miryana Grigorova, Peter Imkeller, Youssef Ouknine, and Marie-Claire Quenez.
\newblock Optimal stopping with f-expectations: the irregular case.
\newblock {\em Stochastic Processes and their Applications}, 130(3):1258--1288, 2020.

\bibitem{grigorova2021american}
Miryana Grigorova, Marie-Claire Quenez, and Agnès Sulem.
\newblock American options in a non-linear incomplete market model with default.
\newblock {\em Stochastic Processes and their Applications}, 142:479--512, 2021.

\bibitem{hamadene1999infinite}
Saïd Hamadène, Jean-Pierre Lepeltier, and Zhen Wu.
\newblock Infinite horizon reflected backward stochastic differential equations and applications in mixed control and game problems.
\newblock {\em Probability and Mathematical Statistics}, 19(2):211--234, 1999.

\bibitem{hamadene2003reflected}
Said Hamadène and Youssef Ouknine.
\newblock Reflected backward stochastic differential equation with jumps and random obstacle.
\newblock {\em Electronic Journal of Probability}, 8:1--20, 2003.

\bibitem{hamadene2016reflected}
Said Hamadène and Youssef Ouknine.
\newblock Reflected backward SDEs with general jumps.
\newblock {\em Theory of Probability \& Its Applications}, 60(2):263--280, 2016.

\bibitem{hobson2019constrained}
David Hobson and Matthew Zeng.
\newblock Constrained optimal stopping, liquidity and effort.
\newblock {\em Stochastic Processes and their Applications}, 150:819--843, 2022.

\bibitem{hobson2021shape}
David Hobson.
\newblock The shape of the value function under Poisson optimal stopping.
\newblock {\em Stochastic Processes and their Applications}, 133:229--246, 2021.

\bibitem{Hu2020}
Ying Hu, Gechun Liang, and Shanjian Tang.
\newblock{Systems of ergodic BSDE arising in regime switching forward performance processes},
\newblock{\em SIAM Journal on Control and Optimization}, 58(4): 2503--2534, 2020.

\bibitem{jacod1985grossissement}
Jean Jacod.
\newblock Grossissement initial, hypothèse {H'} et théorème de Girsanov.
\newblock In {\em Grossissements de filtrations: exemples et applications}, pages 15--35. Springer, 1985.

\bibitem{jeulin2006grossissements}
Thierry Jeulin and Marc Yor.
\newblock {\em Grossissements de filtrations: exemples et applications: Séminaire de Calcul Stochastique 1982/83 Université Paris VI}, volume 1118.
\newblock Springer, 2006.

\bibitem{jiao2013}
Ying Jiao, Idris Kharroubi, and Huy\^en Pham.
\newblock Optimal investment under multiple defaults risk: a {BSDE}-decomposition approach.
\newblock {\em The Annals of Applied Probability}, 23(2):455--491, 2013.

\bibitem{kharroubi2014progressive}
Idris Kharroubi and Thomas Lim.
\newblock Progressive enlargement of filtrations and backward stochastic differential equations with jumps.
\newblock {\em Journal of Theoretical Probability}, 27(3):683--724, 2014.

\bibitem{Tianyang}
Edward Kim, Tianyang Nie, and Marek Rutkowski.
\newblock American options in nonlinear markets.
\newblock {\em Electronic Journal of Probability}, 26:1--41, 2021.

\bibitem{lempa2012optimal}
Jukka Lempa.
\newblock Optimal stopping with information constraint.
\newblock {\em Applied Mathematics \& Optimization}, 66(2):147--173, 2012.

\bibitem{Lepeltier2005}
Jean-Pierre Lepeltier and Mingyu Xu.
\newblock Penalization method for reflected backward stochastic differential equations with one r.c.l.l. barrier.
\newblock {\em Statistics and Probability Letters}, 75(1):58--66, 2005.

\bibitem{liang2015funding}
Gechun Liang, Eva Lütkebohmert, and Wei Wei.
\newblock Funding liquidity, debt tenor structure, and creditor's belief: an exogenous dynamic debt run model.
\newblock {\em Mathematics and Financial Economics}, 9(4):271--302, 2015.

\bibitem{liang2015stochastic}
Gechun Liang.
\newblock Stochastic control representations for penalized backward stochastic differential equations.
\newblock {\em SIAM Journal on Control and Optimization}, 53(3):1440--1463, 2015.

\bibitem{liang2019dynkin}
Gechun Liang and Haodong Sun.
\newblock Dynkin games with Poisson random intervention times.
\newblock {\em SIAM Journal on Control and Optimization}, 57(4):2962--2991, 2019.

\bibitem{menaldi2016some}
José-Luis Menaldi and Maurice Robin.
\newblock On some optimal stopping problems with constraint.
\newblock {\em SIAM Journal on Control and Optimization}, 54(5):2650--2671, 2016.

\bibitem{menaldi2017some}
José-Luis Menaldi and Maurice Robin.
\newblock On some impulse control problems with constraint.
\newblock {\em SIAM Journal on Control and Optimization}, 55(5):3204--3225, 2017.

\bibitem{menaldi2018some}
José-Luis Menaldi and Maurice Robin.
\newblock On some ergodic impulse control problems with constraint.
\newblock {\em SIAM Journal on Control and Optimization}, 56(4):2690--2711, 2018.

\bibitem{Palmowski2020}
Zbigniew Palmowski, Jose Luis Perez, Budhi Arta Surya, and Kazutoshi Yamazaki.
\newblock The Leland-Toft optimal capital structure model under Poisson observations.
\newblock {\em Finance and Stochastics}, 24:1035--1082, 2020.

\bibitem{pham2010stochastic}
Huyen Pham.
\newblock Stochastic control under progressive enlargement of filtrations and applications to multiple defaults risk management.
\newblock {\em Stochastic Processes and their Applications}, 120(9):1795--1820, 2010.

\bibitem{quenez2013bsdes}
Marie-Claire Quenez and Agnès Sulem.
\newblock BSDEs with jumps, optimization and applications to dynamic risk measures.
\newblock {\em Stochastic Processes and their Applications}, 123(8):3328--3357, 2013.

\bibitem{quenez2014reflected}
Marie-Claire Quenez and Agnès Sulem.
\newblock Reflected BSDEs and robust optimal stopping for dynamic risk measures with jumps.
\newblock {\em Stochastic Processes and their Applications}, 124(9):3031--3054, 2014.

\bibitem{reisinger2020error}
Christoph Reisinger and Yufei Zhang.
\newblock Error estimates of penalty schemes for quasi-variational inequalities arising from impulse control problems.
\newblock {\em SIAM Journal on Control and Optimization}, 58(1):243--276, 2020.

\bibitem{wu2003fully}
Zhen Wu.
\newblock Fully coupled FBSDE with Brownian motion and Poisson process in stopping time duration.
\newblock {\em Journal of the Australian Mathematical Society}, 74(2):249--266, 2003.

\end{thebibliography}

\begin{figure}[H]
    \centering
    \begin{subfigure}[t]{0.9\linewidth}
        \centering
        \includegraphics[width=1\textwidth]{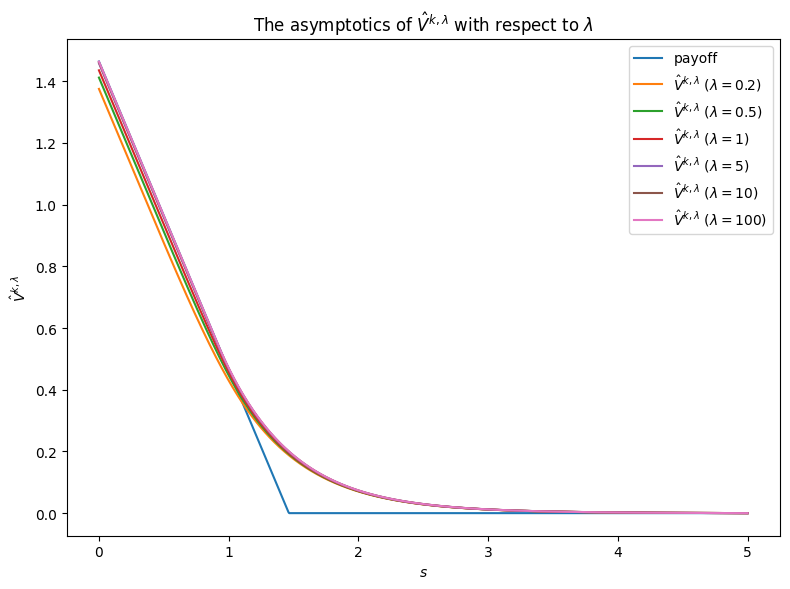}
    \end{subfigure}
    \vspace{0.01\textwidth}
    \begin{subfigure}[t]{0.9\linewidth}
        \centering
        \includegraphics[width=1\textwidth]{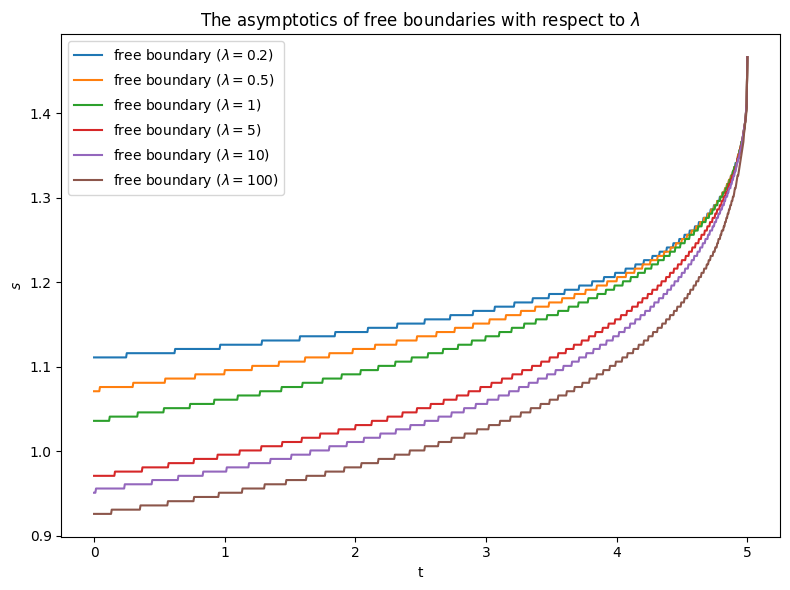}
    \end{subfigure}
    \caption{The asymptotic behaviours of the value functions and the free boundaries with respect to $\lambda$. Parameters: $\sigma = 0.2, \underline{r} = 0.02, \overline{r} = 0.05, \eta= 0.1, T = 5, k=4.$}
    \label{fig:values}
\end{figure}

\begin{figure}[H]
\centering
\begin{subfigure}[t]{0.9\linewidth}
\centering
\includegraphics[width=1\textwidth]{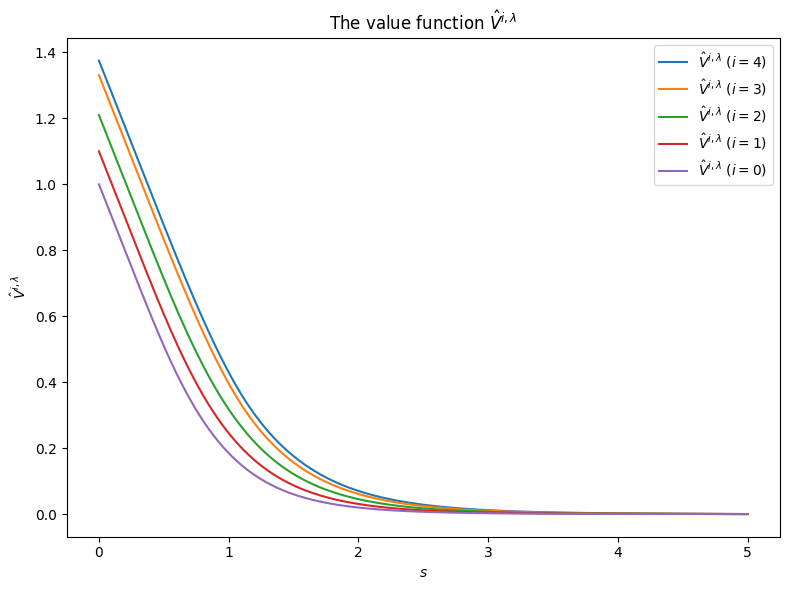}
\end{subfigure}
\vspace{0.02\textwidth}
\begin{subfigure}[t]{0.9\linewidth}
\centering
\includegraphics[width=1\textwidth]{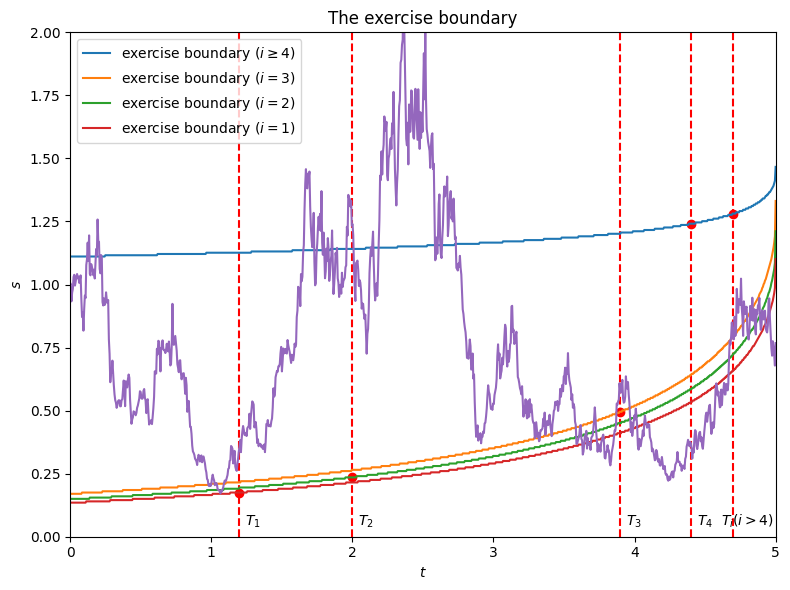}
\end{subfigure}
\caption{The value functions and the free boundaries at Poisson random times. Parameters: $\sigma = 0.2, \underline{r} = 0.02, \overline{r} = 0.05, \eta= 0.1, T = 5, k=4, \lambda=0.2.$}
\label{fig:simulations}
\end{figure}

\end{document}